\theoremstyle{plain}
\newtheorem{theorem}{Theorem}
\newtheorem{lemma}{Lemma}
\newtheorem{prop}{Proposition}
\theoremstyle{definition}
\newtheorem*{Acknowledgements}{Acknowledgements}
\newtheorem{remark}{Remark}
\newtheorem*{notation}{Notation and conventions}
\renewcommand{\geq}{\geqslant}
\renewcommand{\leq}{\leqslant}
\renewcommand{\mod}{\mathrm{mod}\,}
\DeclareMathOperator{\GL}{GL}
\DeclareMathOperator{\SL}{SL}
\newcommand{\eps}{\varepsilon}
\newcommand{\RR}{\mathbb{R}}
\newcommand{\CC}{\mathbb{C}}
\newcommand{\QQ}{\mathbb{Q}}
\newcommand{\ZZ}{\mathbb{Z}}
\newcommand{\NN}{\mathbb{N}}
\newcommand{\asterisk}{\ \sideset{}{^*}}
\newcommand{\gconst}{\tilde g_\text{const}}
\newcommand{\gser}{\tilde g_\text{ser}}
\newcommand{\ov}[1]{\overline{#1}}
\DeclareRobustCommand\widecheck[1]{{\mathpalette\@widecheck{#1}}}
\def\@widecheck#1#2{%
    \setbox\z@\hbox{\m@th$#1#2$}%
    \setbox\tw@\hbox{\m@th$#1%
       \widehat{%
          \vrule\@width\z@\@height\ht\z@
          \vrule\@height\z@\@width\wd\z@}$}%
    \dp\tw@-\ht\z@
    \@tempdima\ht\z@ \advance\@tempdima2\ht\tw@ \divide\@tempdima\thr@@
    \setbox\tw@\hbox{%
       \raise\@tempdima\hbox{\scalebox{1}[-1]{\lower\@tempdima\box
\tw@}}}%
    {\ooalign{\box\tw@ \cr \box\z@}}}
\begin{document}

\author{Valentin Blomer}
\address{Mathematisches Institut, Bunsenstr. 3-5, 37073 G\"ottingen, Germany} \email{blomer@uni-math.gwdg.de}

\author{Gergely Harcos}
\address{Alfr\'ed R\'enyi Institute of Mathematics, Hungarian Academy of Sciences, POB 127, Budapest H-1364, Hungary} \email{gharcos@renyi.hu}
\title[An asymptotic formula]{A hybrid asymptotic formula for the second moment of Rankin--Selberg $L$-functions}

\thanks{The first author was supported   by a Volkswagen Lichtenberg Fellowship and a European Research Council (ERC) Starting Grant 258713. The second author was supported by European Community grant ERG~239277 within the 7th Framework Programme and by OTKA grants K~72731 and PD~75126.}

\keywords{Rankin--Selberg $L$-functions, asymptotic formula, moments}

\begin{abstract} Let $g$ be a fixed modular form of full level, and let $\{f_{j, k}\}$ be a basis of holomorphic cuspidal newforms of even weight $k$, fixed level and fixed primitive nebentypus. We consider the Rankin--Selberg $L$-functions $L(1/2 + it, f_{j, k} \otimes g)$ and compute their second moment over $t \asymp T$ and $k \asymp K$.
For $K^{3/4+\eps} \leq T \leq K^{5/4-\eps}$ we obtain an asymptotic formula with a power saving error term.
Our result covers the second moment of $L(1/2 + it+ir, f_{j, k})L(1/2 + it-ir, f_{j, k})$ for any fixed real number $r$, hence
also the fourth moment of $L(1/2 + it, f_{j, k})$.
For the proof we develop a precise uniform approximate functional equation with explicit dependence on the archimedean parameters.
\end{abstract}

\subjclass[2000]{Primary 11M41}

\setcounter{tocdepth}{2} \maketitle 

\section{Introduction}

Moments of families of $L$-functions on the critical line play an important role in arithmetic. Often they constitute a crucial input for subconvexity or non-vanishing results that have far-reaching applications. In this article we consider degree four Rankin-Selberg convolutions of a fixed modular form with holomorphic cusp forms of large weight $k$ and their associated $L$-functions at a point $1/2 + it$ high on the critical line. We aim at an asymptotic formula for the second moment with a power saving error term, where $k$ and $t$ are both large, but possibly in different ranges. A special case treats the fourth moment of $L$-functions associated with holomorphic cusp forms.

More precisely, we shall consider the following situation. Let $N\geq 1$ be an integer and $\chi$ an even primitive Dirichlet character modulo $N$, in particular $N\not\equiv 2\pmod{4}$. For even $k\geq 2$ let
\begin{equation}\label{holobasis}
  f_{j, k}(z) = \sum_{m=1}^{\infty} \lambda_{j, k}(m)m^{\frac{k-1}{2}}e(mz), \qquad 1\leq j\leq \theta_{k}(N, \chi) := \text{dim}_{\CC}S_k(N, \chi),
\end{equation}
be an orthogonal basis, consisting of arithmetically normalized newforms, for the cuspidal space $S_{k}(N, \chi)$; such a basis exists by the assumption that $\chi$ is primitive. It follows from \cite[Th\'eor\`{e}me~1]{CH} that
with the exception of finitely many pairs $(N,k)$ we have (see the end of this introduction for a precise statement)
\begin{equation}\label{dim}
  \theta_k(N, \chi) = kN^{1+o(1)}.
\end{equation}
For a cusp form $f \in S_k(N, \chi)$ we define
\begin{displaymath}
 \rho(f) := \frac{\Gamma(k-1)}{(4\pi)^{k-1} \| f\|^2},
\end{displaymath}
where
\[\|f\|^2 := \int_{\Gamma_0(N) \backslash \mathbb{H}} |f(z)|^2 y^{k-2} \,dxdy\] is the usual Petersson norm. For  the Hecke eigenform $f_{j, k}$ it is known by the work of Iwaniec and Hoffstein--Lockhart~\cite{Iw2,HL} that
\begin{equation}\label{defrho}
  \rho(f_{j, k}) = (kN)^{-1+o(1)}.
\end{equation}
Let $g$ be a fixed automorphic form for the full modular group, which can be
\begin{itemize}
\item a holomorphic cusp form of some weight $k' < k$ with Hecke eigenvalues $\lambda_g(n)$;
\item a Maa{\ss } cusp form of weight zero, spectral parameter $r = \sqrt{\lambda-1/4} \geq 0$, sign $\epsilon_g \in \{\pm 1\}$, with Hecke eigenvalues $\lambda_g(n)$;
\item an Eisenstein series $E_{r} := E(z, 1/2+ir)$ with $r \in \RR \setminus \{0\}$, or  $E_0 := \frac{\partial}{\partial s} E(z, s) |_{s = 1/2}$, with Hecke eigenvalues $\lambda_g(n) =  \sum_{ab=n} (a/b)^{ir}$.
\end{itemize}
Let
\begin{equation}\label{RS}
  L(s, f_{j, k} \otimes g) = L(2s, \chi) \sum_{n=1}^{\infty} \frac{\lambda_{j, k}(n) \lambda_g(n)}{n^s}
\end{equation}
be the corresponding Rankin--Selberg $L$-function. Our assumptions on $f_{j, k}$ and $g$ imply that for $g$ cuspidal this is really the usual Rankin--Selberg $L$-function (see \cite{Li}, in particular Example~2 on p.~146).
This $L$-function is entire (and by a deep result of Ramakrishnan~\cite{Ra2} it is even cuspidal on $\GL_4$ over $\QQ$, but we will not need this). In the case  $g = E_r$ we take the right hand side of \eqref{RS} as the definition of the left hand side, then by the multiplicative properties of Hecke eigenvalues we obtain
\begin{equation}\label{Eistwist}
  L(s, f_{j, k} \otimes E_{r}) = L(s+ir, f_{j, k})L(s-ir, f_{j, k}).
\end{equation}

Let $W_1, W_2:(0,\infty)\to[0,\infty)$ be fixed smooth functions with nonempty support in $[1, 2]$, and let $T, K \geq 1$ be two sufficiently large parameters. The aim of this article is to obtain an asymptotic formula for
\begin{equation}\label{defI}
\mathcal{I}(T, K) := \int_{0}^{\infty} W_1\left(\frac{t}{T}\right)\sum_{k \equiv 0 \, (2)} W_2\left(\frac{k-1}{K}\right)  \sum_{j=1}^{\theta_{k}(N, \chi)}\rho(f_{j, k}) |L(1/2 + it, f_{j, k} \otimes g)|^2 \,dt.
\end{equation}
The square root of the analytic conductor of $|L(1/2 + it, f_{j, k} \otimes g)|^2$ is approximately given by (cf.\ \eqref{para3} below)
\begin{equation}\label{simplecond}
  \mathcal{C}(t, k) := \frac{N^2}{(2\pi)^4}\left(t^2+\frac{k^2}{4}\right)^2.
\end{equation}
For $j \in \NN_0$ and $r \in \RR$ we shall need the following smooth averages:
\begin{equation}\label{smoothlog}
\begin{split}
\mathcal{L}_j(T, K) &  := \frac{1}{TK}
\int_{0}^{\infty} \int_{0}^{\infty}   W_1\left(\frac{t}{T}\right)  W_2\left(\frac{x}{K}\right)  \log^{j}\mathcal{C}(t, x) \,dt\, dx\asymp \log^j(T+K), \\
\mathcal{M}_{ir}(T, K) & := \frac{1}{TK}
\int_{0}^{\infty} \int_{0}^{\infty}   W_1\left(\frac{t}{T}\right)  W_2\left(\frac{x}{K}\right)   \mathcal{C}(t, x)^{ir} \,dt\, dx \ll 1.
\end{split}
\end{equation}

The shape of the asymptotic formula depends on the type of $g$, hence we formulate our main results in three separate theorems.  We derive an asymptotic formula which is nontrivial precisely when
\begin{equation}\label{assumption}
  K^{3/4+\eps} \leq T \leq K^{5/4-\eps}
\end{equation}
for some $\eps>0$.

\begin{theorem}\label{theorem1} For $g$ cuspidal there are constants $a_0,a_1\in\RR$ depending only on $N$ and $g$ such that
\begin{displaymath}
  \mathcal{I}(T, K) = TK \bigl(a_1 \mathcal{L}_1(T, K) + a_0 \mathcal{L}_0(T, K)\bigr)+O_{N, g, W_1, W_2, \eps}\left((TK)^{1+\eps}(T^4K^{-5}+T^{-3}K^4)\right).
\end{displaymath}
The leading constant is given by
\begin{displaymath}
  a_1 =  \frac{1}{2}L(1, {\rm Ad}^2 g)\prod_{p \mid N} \left(1-p^{-2}\right).
\end{displaymath}
\end{theorem}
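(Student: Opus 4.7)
The strategy follows the standard moment paradigm: approximate functional equation, Petersson trace formula, main term extraction from the diagonal, and cancellation in the off-diagonal, with the twist that both archimedean parameters $t$ and $k$ vary and must be controlled uniformly.

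First I would apply a uniform approximate functional equation for $|L(1/2+it, f_{j,k}\otimes g)|^2$ with explicit dependence on $t$ and $k$, of the shape
\[|L(1/2+it, f_{j,k}\otimes g)|^2 = \sum_{m,n\geq 1}\frac{\lambda_{j,k}(m)\overline{\lambda_{j,k}(n)}\lambda_g(m)\overline{\lambda_g(n)}}{\sqrt{mn}}\left(\frac{n}{m}\right)^{it}V(mn;t,k)+(\text{dual}),\]
where $V$ is smooth, essentially supported on $mn\ll\sqrt{\mathcal{C}(t,k)}\asymp N(t^2+k^2/4)$, and has derivative bounds uniform in the four variables. Constructing such a $V$ with controlled archimedean behaviour is precisely the technical novelty announced in the abstract.

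Second, I would apply Petersson's trace formula to $\sum_{j}\rho(f_{j,k})\lambda_{j,k}(m)\overline{\lambda_{j,k}(n)}$, splitting $\mathcal{I}(T,K)$ into a diagonal ($m=n$) and an off-diagonal Kloosterman--Bessel term built from $i^{-k}J_{k-1}(4\pi\sqrt{mn}/c)S_{\chi}(m,n;c)/c$ with $N\mid c$. The diagonal reduces to
\[\int_0^\infty W_1\!\left(\tfrac{t}{T}\right)\sum_{k\equiv 0(2)}W_2\!\left(\tfrac{k-1}{K}\right)\sum_{m}\frac{|\lambda_g(m)|^2}{m}\,V(m^2;t,k)\,dt,\]
which I would evaluate via Mellin inversion. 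Using $\sum_{m}|\lambda_g(m)|^2 m^{-s}=\zeta(s)L(s,\mathrm{Ad}^2 g)/\zeta(2s)$ and shifting the contour past the simple pole at $s=1$, the residue $L(1,\mathrm{Ad}^2 g)$ combines with the scale $\sqrt{\mathcal{C}(t,k)}$ of $V$ to produce $\log\mathcal{C}(t,k)$ together with a constant; integrating against $W_1W_2$ yields exactly $TK\bigl(a_1\mathcal{L}_1(T,K)+a_0\mathcal{L}_0(T,K)\bigr)$. The explicit value of $a_1$ arises from the residue, the normalising factor $\tfrac12$ from the AFE (the two symmetric halves), and the local correction $\prod_{p\mid N}(1-p^{-2})$ coming from the interplay between the Rankin--Selberg factor $L(2s,\chi)$, the $\bar\chi$ in the Petersson main diagonal, and the ramified Euler factors of $\zeta(2s)$ at primes $p\mid N$.

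Third, the core of the argument is to bound the off-diagonal
\[\sum_{N\mid c}\frac{1}{c}\sum_{m,n}\frac{\lambda_g(m)\overline{\lambda_g(n)}S_\chi(m,n;c)}{\sqrt{mn}}\int_0^\infty W_1\!\left(\tfrac{t}{T}\right)\!\left(\tfrac{n}{m}\right)^{it}\!\!\sum_{k\equiv 0(2)}W_2\!\left(\tfrac{k-1}{K}\right)i^{-k}J_{k-1}\!\left(\tfrac{4\pi\sqrt{mn}}{c}\right)V(mn;t,k)\,dt\]
by $(TK)^{1+\eps}(T^4K^{-5}+T^{-3}K^4)$. The plan is to exploit in sequence: (i) the $t$-integral, which by stationary phase localises $m\approx n$ within $|m-n|\ll m/T$; (ii) the $k$-average, which by an integral representation of $J_{k-1}$ (or a Kuznetsov-type spectral transform) localises $4\pi\sqrt{mn}/c$ near $K$; (iii) Voronoi summation on $m$ and $n$, dualising the $\lambda_g$-coefficients modulo $c$; and (iv) Poisson summation in $c$. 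The resulting short dual sum is then estimated trivially, the two error terms reflecting whether the $t$-localisation or the $k$-localisation is the binding one.

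The principal obstacle is the uniform treatment across the transition line $t\asymp k$. The conductor $\mathcal{C}(t,k)$ couples the two archimedean parameters through $t^2+k^2/4$, and the Bessel kernel $J_{k-1}$ has a delicate Airy-type transition at argument $k$ that must be navigated without losing powers; neither stationary phase in $t$ alone nor a spectral transform in $k$ alone suffices in the intermediate regime. It is precisely the need to balance these two mechanisms across the full window $K^{3/4+\eps}\leq T\leq K^{5/4-\eps}$ that forces the stated error shape, with $T^4K^{-5}$ becoming prohibitive at the upper endpoint and $T^{-3}K^4$ at the lower endpoint.
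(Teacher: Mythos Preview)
Your overall architecture---uniform approximate functional equation, Petersson, diagonal extraction via Mellin inversion, then off-diagonal via $t$-integration, $k$-average, and Voronoi---matches the paper's, and your diagonal treatment and identification of $a_1$ are essentially correct.

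The gap is in step~(ii). You assert that the $k$-average localizes $\xi:=4\pi\sqrt{mn}/c$ near $K$; this is what one gets from the decay of an \emph{individual} Bessel function $J_{k-1}(\xi)$ for $\xi<k$, and it yields only $c\ll\sqrt{mn}/K$. The paper instead exploits the factor $i^{-k}$ in Petersson's formula: summing $i^{-k}h\bigl((k-1)/K\bigr)J_{k-1}(\xi)$ over even $k$ (Lemma~\ref{lem2}) produces a main term proportional to $\xi^{-1/2}e(\xi/2\pi)\,\tilde h\bigl(K^2/2\xi\bigr)$ plus an error $O(\xi K^{-4})$. The rapid decay of $\tilde h$ then forces $\xi\gg K^2$, i.e.\ $c\ll\sqrt{mn}/K^2$. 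This extra factor of $K$ in the $c$-truncation is exactly what makes the post-Voronoi dual sums short enough to be bounded directly and to give the stated error; with only $c\ll\sqrt{mn}/K$ the same estimates lose a full power of $K$ and fail throughout the range \eqref{assumption}. Relatedly, your ``principal obstacle'' paragraph worries about the Airy transition of $J_{k-1}$ near argument $k$, but the averaging in Lemma~\ref{lem2} bypasses this transition entirely---no stationary-phase analysis of individual Bessel functions is required.

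Two smaller corrections. First, the paper does not perform Poisson summation in $c$: after the $K^2$-localization and Voronoi in both variables the $c$-sum is already short, and a direct bound via the elementary character-sum estimate of Lemma~\ref{lem6} suffices. Second, before applying Voronoi the paper merges the Kloosterman sum $S_\chi(m_1,m_2,c)$ with the phase $e\bigl(\pm 2\sqrt{m_1m_2}/c\bigr)$ produced by the $k$-sum into a single additive character modulo $d_1d_2c$ (see \eqref{finalchar}), so that Voronoi returns a clean exponential in the dual variables rather than nested Kloosterman sums; skipping this reorganization would leave an unmanageable structure after dualizing.
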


\begin{theorem}\label{theorem2} For $g=E_r$ with $r\neq 0$ there are constants $a_0,a_1,a_2\in\RR$ and $b_\pm\in\CC$ depending only on $N$ and $r$ such that
\begin{displaymath}
\begin{split}
  \mathcal{I}(T, K) = & TK \biggl(\sum_{j=0}^2 a_j \mathcal{L}_j(T, K) +
  b_+ \mathcal{M}_{ir}(T, K)+ b_-\mathcal{M}_{-ir}(T, K)\biggr) \\
  & + O_{N, r, W_1, W_2, \eps} \left((TK)^{1+\eps}(T^4K^{-5}+T^{-3}K^4)\right).
  \end{split}
\end{displaymath}
The leading constant is given by
\[a_2=\frac{1}{8}|\zeta(1+2ir)|^2\prod_{p \mid N} \left(1-p^{-2}\right);\]
moreover,
\begin{displaymath}
  b_{\pm} = \frac{1}{2} \zeta(1\pm 2ir)^4 \prod_{p\mid N}(1-p^{-2\mp 4ir}).
\end{displaymath}
\end{theorem}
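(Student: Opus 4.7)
The strategy is to apply a uniform approximate functional equation to expand $|L(1/2+it, f_{j,k}\otimes E_r)|^2$ as a smoothly weighted bilinear form in the Hecke eigenvalues $\lambda_{j,k}(m)\overline{\lambda_{j,k}(n)}$ with explicit $(t,k)$-dependence, and then apply the Petersson trace formula to remove the spectral sum over $j$. The AFE should produce an expression of the form
\[
|L(1/2+it, f_{j,k}\otimes E_r)|^2 \approx \sum_{m,n\geq 1}\frac{\lambda_{j,k}(m)\overline{\lambda_{j,k}(n)}\lambda_{E_r}(m)\overline{\lambda_{E_r}(n)}}{(mn)^{1/2}}V_{t,k}(m,n)
\]
plus a dual piece, where $V_{t,k}$ sharply localises $mn$ near $\mathcal{C}(t,k)^{1/2}\asymp T^2+K^2$ with explicitly controlled $(t,k)$-dependence; this explicit control is the content of the uniform AFE announced in the abstract. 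Substituting into \eqref{defI} and swapping summation, the inner average over $j$ is $\sum_j \rho(f_{j,k})\lambda_{j,k}(m)\overline{\lambda_{j,k}(n)}$, which by the Petersson formula splits as $\delta_{m=n}$ plus a Kloosterman--Bessel off-diagonal $2\pi i^{-k}\sum_{c\geq 1}c^{-1}S_\chi(m,n;c)J_{k-1}(4\pi\sqrt{mn}/c)$.

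The diagonal $m=n$ supplies the main term. Using $|\lambda_{E_r}(n)|^2=\sigma_{2ir}(n)\sigma_{-2ir}(n)$ and Ramanujan's identity
\[
\sum_{n\geq 1}\frac{\sigma_{2ir}(n)\sigma_{-2ir}(n)}{n^s}=\frac{\zeta(s)^2\zeta(s+2ir)\zeta(s-2ir)}{\zeta(2s)},
\]
the diagonal sum can be evaluated by Mellin inversion against $V_{t,k}$ and contour shift past the singularities of the right hand side. One picks up a double pole at $s=1$, contributing the polynomial $\sum_{j=0}^{2}a_j\mathcal{L}_j(T,K)$ after averaging in $t$ and $k$, and two simple poles at $s=1\mp 2ir$, contributing the oscillating terms $b_\pm\mathcal{M}_{\pm ir}(T,K)$; the polynomial has degree two because $\zeta(s)^2$ has a double pole, in contrast with the cuspidal case of Theorem~\ref{theorem1} where the analogous Dirichlet series has only a simple pole and the polynomial is of degree one. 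The explicit values of $a_2$ and $b_\pm$ claimed in the theorem arise from matching the residue at each pole with the $N$-local factors of the Rankin--Selberg expansion \eqref{RS}: this explains $\prod_{p\mid N}(1-p^{-2})$ in $a_2$ and $\prod_{p\mid N}(1-p^{-2\mp 4ir})$ in $b_\pm$, while the prefactors $|\zeta(1+2ir)|^2$ and $\zeta(1\pm 2ir)^4$ reflect how the fourth--moment identity \eqref{Eistwist} makes shifted zeta factors appear twice in the residue bookkeeping.

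The principal obstacle is the estimation of the off-diagonal Kloosterman--Bessel term. My plan is to open $S_\chi(m,n;c)$, use an integral representation of $J_{k-1}$, and apply Poisson summation to dualise $m$ and $n$; the $t$-average of length $T$ then produces, via stationary phase, a localisation of the shape $c\asymp\sqrt{mn}/T$, while the $k$-sum of length $K$ produces a complementary localisation through the oscillation of $J_{k-1}$. The interplay of these two constraints with the support $\sqrt{mn}\asymp T^2+K^2$ of $V_{t,k}$ determines exactly where the off-diagonal is non-trivial, and balancing these frequencies produces the hybrid error $(TK)^{1+\eps}(T^4K^{-5}+T^{-3}K^4)$, which is a power saving precisely in the range \eqref{assumption}: the first summand saturates when $T$ is small relative to $K$ and the second when $T$ is large, with the thresholds $T\asymp K^{3/4}$ and $T\asymp K^{5/4}$ appearing naturally. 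The off-diagonal analysis should be essentially identical to that for Theorem~\ref{theorem1} once one observes that $\lambda_{E_r}(n)$ is a divisor-type coefficient whose phase $(a/b)^{ir}$ can be absorbed into the smooth weight at the cost of polylogarithmic factors; maintaining the required precision uniformly in $(T,K)$ throughout the whole range \eqref{assumption} is the true technical heart of the proof.
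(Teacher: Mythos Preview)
Your overall architecture (AFE, Petersson, diagonal via Mellin inversion, off-diagonal via Kloosterman--Bessel analysis) matches the paper, but there is a genuine conceptual gap in where you locate the oscillating main terms $b_\pm\mathcal{M}_{\pm ir}(T,K)$.

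You claim $b_\pm$ arise from the diagonal, as residues at $s=1\mp 2ir$ of the Ramanujan-identity Dirichlet series. But compute that residue: it involves $\zeta(1\pm 2ir)^2\zeta(1\pm 4ir)$ (times the local factor at $N$), not the $\zeta(1\pm 2ir)^4$ stated in the theorem. Worse, once you carry the approximate-functional-equation weight through the contour shift, this residue is multiplied by $\widecheck G_0(\pm ir)$, the Mellin transform of your auxiliary cutoff at a nonzero point. So the diagonal contribution to $\mathcal{M}_{\pm ir}$ depends on the \emph{choice} of weight in the AFE, which is a clear signal that it cannot be the final answer and must combine with another piece.

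That other piece is in the off-diagonal, and this is exactly where your claim that ``the off-diagonal analysis should be essentially identical to that for Theorem~\ref{theorem1}'' fails. When $g=E_r$, the Voronoi (or, in your language, Poisson-dualised) sums in $m_1,m_2$ carry extra polar terms coming from the poles of the Estermann/Hurwitz series; the paper tracks four such terms. Three are negligible, but the fully polar one contributes to the main term: it produces a piece proportional to $\widecheck G_0(\pm ir)$ that exactly cancels your diagonal residue, \emph{and} an additional $G_0$-independent piece that yields the stated $b_\pm=\tfrac12\zeta(1\pm 2ir)^4\prod_{p\mid N}(1-p^{-2\mp 4ir})$. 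Extracting this requires a nontrivial computation of a multiple Dirichlet series over $\delta,d_1,d_2,g,h$ with a twisted character sum, which the paper evaluates explicitly as an Euler product. Your plan to ``absorb $(a/b)^{ir}$ into the smooth weight at the cost of polylogarithmic factors'' is fine for error-term estimation but would discard precisely this main-term contribution. The paper flags this phenomenon as ``particularly interesting'' and a genuine feature distinguishing the Eisenstein case from the cuspidal one.
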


\begin{theorem}\label{theorem3} For $g=E_0$ there are constants $a_0,a_1,a_2,a_3,a_4\in\RR$ depending only on $N$  such that
\begin{displaymath}
  \mathcal{I}(T, K) = TK \biggl(\sum_{j=0}^4 a_j \mathcal{L}_j(T, K)  \biggr)+ O_{N, W_1, W_2, \eps} \left((TK)^{1+\eps}(T^4K^{-5}+T^{-3}K^4)\right).
\end{displaymath}
The leading constant is given by
\[a_4=\frac{1}{384}\prod_{p \mid N} \left(1-p^{-2}\right).\]
\end{theorem}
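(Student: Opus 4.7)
The plan is to follow the same architecture used for Theorems~\ref{theorem1} and~\ref{theorem2}, the main new issue being the coalescence of poles at $r = 0$. I would first apply the uniform approximate functional equation developed earlier in the paper to write $|L(\tfrac12+it, f_{j,k}\otimes E_0)|^2$ as a double sum over $m,n$ of $\lambda_{j,k}(m)\ov{\lambda_{j,k}(n)}$, weighted by divisor-type coefficients arising from $E_0$ and a smooth kernel $V_{t,k}(m,n)$ with explicit dependence on the archimedean parameters. Petersson's trace formula would then split the $j$-sum into a diagonal contribution (producing the main term) and an off-diagonal sum of Kloosterman sums weighted by a Bessel transform of $V_{t,k}$.

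The off-diagonal contribution would be bounded exactly as for Theorems~\ref{theorem1} and~\ref{theorem2}: the relevant Kloosterman/stationary-phase analysis on the $(t,k)$-weight is insensitive to the specific type of $g$, only the divisor bound $\lambda_{E_0}(n)\ll n^\eps$ entering, and produces the uniform error bound $(TK)^{1+\eps}(T^4K^{-5}+T^{-3}K^4)$ under the hypothesis~\eqref{assumption}. This is technically the hardest step of the paper, but no modification is required for $g = E_0$.

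The new feature is the main term, which I would obtain by keeping the $E_r$-calculation intact and analysing its limit as $r\to 0$. In Theorem~\ref{theorem2}, the main term arises from poles of a Mellin--Barnes integrand at $s = 1\pm 2ir$ (producing $b_\pm \mathcal{M}_{\pm ir}$) and at $s = 1$ (producing $\sum_{j=0}^{2} a_j\mathcal{L}_j$); for $g = E_0$ all these poles coalesce at $s = 1$ into a single pole of order four, whose residue is a polynomial of degree four in $\log\mathcal{C}(t, k)$. Integrating this polynomial against $W_1(t/T)W_2((k-1)/K)$ gives the claimed formula $\sum_{j=0}^{4}a_j\mathcal{L}_j(T,K)$. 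The top-order constant $a_4 = \tfrac{1}{384}\prod_{p\mid N}(1-p^{-2})$ can be checked by pairing the $r^{-4}$ divergence of $\tfrac12(\zeta(1+2ir)^4+\zeta(1-2ir)^4)$ against the degree-four Taylor coefficient of $\mathcal{C}^{\pm ir}$ inside $\mathcal{M}_{\pm ir}$, confirming $2\cdot\tfrac12\cdot(2i)^{-4}\cdot i^4/4! = 1/384$.

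The main obstacle is this coalescence step: the individual terms $b_\pm\mathcal{M}_{\pm ir}$ diverge like $r^{-4}$ as $r\to 0$, while their assembly with $\sum a_j(r)\mathcal{L}_j$ remains finite by cancellation. The cleanest resolution would be to perform the residue computation directly at $r = 0$ inside the relevant contour integral (rather than taking the $r\to 0$ limit of Theorem~\ref{theorem2}'s final formula), which avoids any apparent singularity and makes the derivation of the five constants $a_0,\dots,a_4$ a routine Laurent--Taylor expansion of the holomorphic factors at $s = 1$.
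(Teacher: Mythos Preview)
Your proposal has a genuine gap in the second paragraph. You assert that the off-diagonal term from Petersson's formula ``would be bounded exactly as for Theorems~\ref{theorem1} and~\ref{theorem2}'' and yields only the error $(TK)^{1+\eps}(T^4K^{-5}+T^{-3}K^4)$. This is false for Eisenstein $g$. When one applies Voronoi summation (Proposition~\ref{voro}) to the off-diagonal sums over $m_1,m_2$, the formula for $g=E_r$ acquires the polar correction \eqref{vorpolar2}, and the resulting ``double polar'' contribution (both $m_1$- and $m_2$-sums picking up the polar term) is \emph{not} an error: it produces a genuine main term of size $\asymp TK\log^2(T+K)$, specifically a linear combination of $\mathcal{L}_0,\mathcal{L}_1,\mathcal{L}_2$. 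The paper flags this explicitly: in the Eisenstein case the off-diagonal term contributes to the main terms of Theorems~\ref{theorem2} and~\ref{theorem3}. If you omit this contribution, your claimed error bound on the off-diagonal simply does not hold, and your values of $a_0,a_1,a_2$ are wrong.

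A related misconception appears in your third paragraph: for $r\neq 0$ the secondary constants $b_\pm$ in Theorem~\ref{theorem2} do \emph{not} arise from poles of the diagonal Mellin--Barnes integrand. The diagonal does have simple poles at $s=\pm ir$, but their residues depend on the auxiliary weight $\widecheck G_0(\pm ir)$ and are exactly cancelled by part of the off-diagonal polar term; what survives as $b_\pm$ is a contribution from the off-diagonal. Your limiting computation of $a_4$ from $b_+\mathcal{M}_{ir}+b_-\mathcal{M}_{-ir}$ as $r\to 0$ happens to give the correct number, because the full main term is continuous in $r$; but in the direct $r=0$ computation the paper carries out, $a_4$ comes entirely from the diagonal (the integrand $\widecheck G(s)\zeta(1+2s)^4/\zeta(2+4s)\cdots$ has a pole of order \emph{five} at $s=0$, not four, yielding the degree-four polynomial in $\log\mathcal{C}$), while the off-diagonal polar piece only corrects $a_0,a_1,a_2$.
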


\begin{remark}
For $T=K$ and fixed $W_{1,2}$ the above asymptotic formulae take a particularly simple shape as
$\mathcal{L}_j(K, K)$ is a polynomial in $\log K$ of degree $j$ and $\mathcal{M}_{ir}(K,K)$ is proportional to $K^{4ir}$.
\end{remark}

The present paper was inspired by recent work of Kim--Zhang~\cite{KZ} based on a similar result of Duke~\cite{Du} which in turn was motivated by a paper of Sarnak~\cite{Sa1}. These three papers have in common that they estimate an archimedean family of $L$-functions of conductor $T^8$ and size $T^2$ \cite{Sa1} (resp.\ $T^3$ \cite{Du,KZ}), without using approximate functional equations and orthogonality properties of the family. Rather, the argument is based on $L^2$-techniques, and the idea is to express the moment as the norm of an automorphic form on hyperbolic 3-space \cite{Sa1} (resp.\ 5-space \cite{Du}) when restricted to a certain cone. This very elegant and flexible approach gives upper bounds with the correct log-power, but (in its basic form) does not yield an asymptotic formula of the above kind. Moreover, it requires the various archimedean parameters to be of the same size. With more work, these problems could be dealt with essentially by treating the off-diagonal contribution nontrivially as in \cite{PS}.
We also note that there are many other results in the literature on various moments of Rankin--Selberg $L$-functions, e.g.\ \cite{HM, LLY, Sa}. Particularly interesting is the preprint \cite{Y}, where an upper bound for the    $\GL_3 \times \GL_2$ analogue of the quantity \eqref{defI} for $T \approx K$ is obtained. The paper \cite{Kh} is also somewhat similar in spirit to the present work. General conjectures for a wide variety of moments of $L$-functions have been formulated in \cite{C+}, see in particular Conjecture~4.5.1, where the family of Theorems~\ref{theorem2} and \ref{theorem3} is considered for fixed $t$ and $k$. It seems very complicated to integrate the right hand side of \cite[(4.5.8)]{C+} over $t$ and $k$.\\

We use essentially the same type of family as in \cite{Du,KZ}. The novelty in our work is the derivation of an asymptotic formula with independent parameters $T$ and $K$; there are not many results of this type in the literature. An upper bound of essentially the right order of magnitude for $\mathcal{I}(T,K)$ in Theorem~\ref{theorem1} follows immediately from \cite[Section~3]{JM}, even without integration over $t$. The main point here is really to prove an asymptotic formula with a power saving error term which turns out to be quite strong in the case $T=K$. Our family has conductor $(T+K)^8$ and size $TK^2$, hence in the extreme cases of Theorem~\ref{theorem1} (cf.\ \eqref{assumption}) we are dealing with a family of conductor $K^8$ and size $K^{11/4}$, or a family of conductor $T^8$ and size $T^{13/5}$.
Theorem~\ref{theorem3} states a precise asymptotic formula for the fourth moment of certain modular $L$-functions (cf.\ \eqref{Eistwist}--\eqref{defI}), improving substantially on \cite[Theorem~1]{Du} and \cite[Theorem~2]{KZ}.

Our approach is based on classical techniques, in particular an approximate functional equation, Petersson's formula, Voronoi summation etc. It may be observed, however, that at least in the range \eqref{assumption}  we do not have to treat shifted convolutions sums explicitly, although of course they appear throughout the discussion of the off-diagonal term. In addition, we do not have to use any nontrivial bound for twisted
Kloosterman sums.

As an interesting feature we remark that in the Eisenstein case the off-diagonal term contributes to the main terms in Theorems~\ref{theorem2} and \ref{theorem3}, and it requires a non-trivial manipulation to recover the exact shape of this off-diagonal contribution. The shape of the asymptotic formula in Theorem \ref{theorem2} is particularly interesting, as it contains oscillating secondary terms $\mathcal{M}_{\pm ir}(T,K)$. This seems to be a new phenomenon of this family. The fact that the off-diagonal term contributes towards the main term seems to be a general feature in moment computations of Rankin-Selberg $L$-functions with Eisenstein series, see for instance \cite{Bl} or \cite{DFI}. It would be desirable to have a conceptual understanding of this empirical phenomenon. It also indicates that there is some intrinsic difficulty in deriving an asymptotic formula for a fourth moment of automorphic $L$-functions as in Theorems~\ref{theorem2} and \ref{theorem3}.

Our method could in principle treat more general assumptions on the parameters of $f_{j, k}$ and $g$, but this would infer technicalities that may obscure the paper rather than be useful in applications. It is  possible by standard techniques to remove the harmonic weight $\rho(f_{j, k})$, see e.g.\ \cite[Section~26.5]{IK}. It should also be possible to replace the family of holomorphic cusp forms of weights $k \asymp K$ by a basis of Maa{\ss} forms of spectral parameters $t_j\asymp K$. In this situation, however, the analysis would be somewhat different, and in certain ranges one would encounter effects where the conductor drops, see e.g.\ \cite{Y1} for a related discussion. Finally and most importantly, it is possible to prove asymptotic formulae in the spirit of Theorem~\ref{theorem1} in other ranges, for example in $K^{\varepsilon} \leq T \leq K$, and most likely also for $T$ considerably bigger than $K^{5/4}$. This would require, however, a careful treatment of shifted convolution sums, in the latter case of large $T$ with oscillating weight functions. As remarked above, the present approach avoids shifted convolution sums.

In the course of the proof we use two auxiliary results that may be useful in other situations. These are certainly known to specialists, but we could not find them in the literature. Proposition~\ref{prop1} proves a relatively sophisticated form of the approximate functional equation, while Proposition~\ref{voro} establishes a Voronoi summation formula for Eisenstein series $E_r$.

Finally we remark that \cite[Th\'eor\`{e}me~1]{CH} can be used to show that
$\theta_{k}(N, \chi)=0$ for at least one even primitive $\chi$ if and only if the pair $(N,k)$ is one of
$(1,2)$, $(1,4)$, $(1,6)$, $(1,8)$, $(1,10)$, $(1,14)$, $(5,2)$, $(7,2)$, $(8,2)$, $(9,2)$, $(11,2)$, $(12,2)$,
$(13,2)$, $(15,2)$, $(17,2)$, $(19,2)$, $(21,2)$. In other words, \eqref{dim} holds for $(N,k)$ outside this explicit set.

\begin{notation} In an effort to lighten the notational burden, we will use the following conventions. The symbol $\eps$ denotes an arbitrarily small positive constant and the letter $A$ denotes an arbitrarily large positive constant, not necessarily the same at each occurrence. All implied constants may depend on $\eps$, $A$, $W_1$, $W_2$. This allows us to write, for instance, $C^{\eps} \log C \ll C^{\eps}$. All implied constants may also depend on $N$ and the parameters of the fixed modular form $g$; the reader may check that the dependence on these quantities is always polynomial. The phrase ``negligible error" means an error of size $O\bigl((T+K)^{-A}\bigr)$. By \eqref{assumption} this is the same as $O(T^{-A})$ or $O(K^{-A})$.
\end{notation}

\begin{Acknowledgements} We thank the Institute for Advanced Study for inviting us to the inspiring ``Workshop on Analytic Number Theory" in March 2010, where good progress was made on this paper. We also thank the referee for a careful reading and numerous comments.
\end{Acknowledgements}

\section{Preparatory material}

\subsection{A uniform approximate functional equation}

We start with a general result that may be of independent interest. In order to prove an asymptotic formula for $\mathcal{I}(T, K)$, it is very important to have an approximate functional equation with a weight function that is independent of the archimedean parameters. An approximate functional equation of this kind was developed in general in \cite[Theorem~2.5]{Ha}.  Here we need a slightly stronger variant of this result with a better error term, which forces us to use a somewhat more complicated main term, similarly as in \cite[Lemma~1]{HB}. We shall prove a general result since it requires the same amount of work as the proof of the special case of Rankin--Selberg $L$-functions we are interested in here.

At this point we take the opportunity to correct an inaccuracy in our earlier work \cite{BH}. A uniform approximate functional equation as stated in Proposition~\ref{prop1} below should have been used in \cite[(2.12)]{BH}. This would justify, at the cost of an admissible error $O_{\eps,A}\left(D^{1/2+\eps}T^{-A}\right)$, the tacit assumption that the weight function $V$ in \cite[(7.2)]{BH} is essentially independent of $t$.\\

We keep the notation of \cite{Ha}. Let $F$ be a number field of degree $d$, and let $\pi = \otimes_v\pi_v$  be an isobaric automorphic representation of $\GL_m$ over $F$ (cf.\ \cite{Ra}) with unitary central character and contragradient representation $\tilde{\pi}$. The corresponding $L$-functions are defined for $\Re s > 1$  by absolutely convergent Dirichlet series as
\begin{displaymath}
  L(s, \pi) = \sum_{n=1}^{\infty} \frac{a_n}{n^s}, \qquad L(s, \tilde{\pi}) = \sum_{n=1}^{\infty} \frac{\ov{a_n}}{n^s}
\end{displaymath}
which extend to meromorphic functions on $\CC$ with finitely many poles,
and these are connected by a functional equation of the form
\begin{displaymath}
  \mathcal{N}^{s/2} L(s, \pi_{\infty}) L(s, \pi) = \kappa \mathcal{N}^{(1-s)/2} L(1-s, \tilde{\pi}_{\infty})L(1-s, \tilde{\pi}).
\end{displaymath}
Here $\mathcal{N}$ is the conductor (a positive integer), $\kappa$ is the root number (of modulus 1) and
\begin{displaymath}
  L(s, \pi_{\infty}) = \prod_{j=1}^{md} \pi^{-\frac{s+\mu_j}{2}} \Gamma\left(\frac{s+\mu_j}{2}\right), \qquad L(s, \tilde{\pi}_{\infty}) = \prod_{j=1}^{md} \pi^{-\frac{s+\ov{\mu_j}}{2}} \Gamma\left(\frac{s+\ov{\mu_j}}{2}\right)
\end{displaymath}
for certain $\mu_j \in \CC$ which satisfy
\begin{equation}\label{LRS}
\Re \mu_j  \geq  \frac{1}{m^2+1} - \frac{1}{2}
\end{equation}
by a result of Luo--Rudnick--Sarnak~\cite{LRS}. We put
\begin{displaymath}
\eta_j :=  \frac{1}{4}+\frac{\mu_j}{2},\qquad
  \eta := \min_{1 \leq j \leq md} |\eta_j|, \qquad \lambda := \frac{L(1/2, \tilde{\pi}_{\infty})}{L(1/2, \pi_{\infty})},
\end{displaymath}
and we define the analytic conductor (at $s=1/2$) as
\begin{displaymath}
  C := \frac{\mathcal{N}}{(2\pi)^{md}} \prod_{j=1}^{md} \left|\frac{1}{2}+\mu_j\right| = \frac{\mathcal{N}}{\pi^{md}} \prod_{j=1}^{md} |\eta_j|.
\end{displaymath}
For a multi-index $\bm n \in \NN_0^{2md}$ we write $|\bm n|: = n(1) + \cdots + n(2md)$ and
\begin{equation}\label{fateta}
  {\bm\eta}^{-\bm n} := \prod_{j=1}^{md} \eta_j^{-{\bm n}(2j-1)} \ov{\eta_j}^{\,-{\bm n}(2j)}.
\end{equation}
By a result of Molteni~\cite[Theorem~4]{Mo}, the coefficients of $L(s,\pi)$ satisfy the uniform bound
\begin{equation}\label{molteni}
\sum_{n\leq x}|a_n|\ll_\eps x^{1+\eps}C^\eps.
\end{equation}
To be precise, axiom (A4) in \cite{Mo} includes $\Re\mu_j\geq 0$ but \eqref{LRS} is sufficient for the proof.

Let us assume that $L(s,\pi)$ is entire, then we can state the following result:

\begin{prop}\label{prop1} Let $G_0 : (0, \infty) \rightarrow \RR$ be a smooth function with functional equation $G_0(x) + G_0(1/x) = 1$ and derivatives decaying faster than any negative power of $x$ as $x \rightarrow \infty$. Let $M \in \NN$. There are explicitly computable rational constants $c_{{\bm n},\ell} \in \QQ$ depending only on ${\bm n}$, $\ell$, $M$, $m$, $d$ such that the following holds for
\begin{equation}\label{Gdef}
  G(x)  :=   G_0(x)+\sum_{\substack{0< |{\bm n}| < M \\ 0< \ell < |{\bm n}|+M}} c_{{\bm n},\ell} {\bm\eta}^{-\bm n}   \left(x\frac{\partial}{\partial x}\right)^{\ell} G_0(x).
\end{equation}

a) The function $G$  is smooth and its Mellin transform $\widecheck G$ is holomorphic everywhere except for a simple pole at $s=0$ with formal Laurent expansion
\begin{equation}\label{laurent}
  \widecheck G(s) \sim \frac{1}{s} + \sum_{j=0}^\infty c_j s^j, \qquad  c_j = \frac{-1}{(j+1)!} \int_0^{\infty} G'(x) (\log x)^{j+1} \,dx = d_j + O(\eta^{-1}).
\end{equation}
Moreover, one has
\begin{equation}\label{boundsG}
\frac{  \partial^j}{\partial x^j} G(x) \ll (1+x)^{-A}; \qquad \widecheck G(s) \ll (1+|s|)^{-A}\ \text{for $|\Re s|<\sigma_0$ and $|s| > 1$}.
\end{equation}
Here $d_j$ and the implied constants depend at most on $j$, $A$, $\sigma_0$, $M$, $m$, $d$, and the function $G_0$.
\\

b) For any $\eps > 0$ one has
\begin{equation}\label{approxfuncteq}
  L(1/2, \pi) = \sum_{n=1}^{\infty} \frac{a_n}{\sqrt{n}} G\left(\frac{n}{\sqrt{C}}\right) + \kappa\lambda \ov{\sum_{n=1}^{\infty} \frac{a_n}{\sqrt{n}} G\left(\frac{n}{\sqrt{C}}\right)} + O\bigl(\eta^{-M}C^{1/4+\eps}\bigr),
\end{equation}
where the implied constant depends at most on $\eps$, $M$, $m$, $d$, and the function $G_0$.
\end{prop}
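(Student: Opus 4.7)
The strategy is a classical approximate-functional-equation argument, with all archimedean data absorbed into the definition of $G$ so that the weight is uniform in the spectral parameters. Starting from
\[
J := \frac{1}{2\pi i}\int_{(\sigma)} L(1/2+s, \pi)\, C^{s/2}\, \widecheck G(s)\, ds, \qquad \sigma>1/2,
\]
unfolding the Dirichlet series yields $J = \sum_n a_n n^{-1/2}\, G(n/\sqrt C)$. Shifting the contour to $\Re s=-\sigma_0$ crosses a simple pole of $\widecheck G$ at $s=0$ with residue $1$, producing $L(1/2,\pi)$; the substitution $s\mapsto -s$ together with the functional equation for $L(s,\pi)$ then converts the remaining integral into
\[
-\frac{\kappa}{2\pi i}\int_{(\sigma_0)} \Phi(s)\, L(1/2+s, \tilde\pi)\, \widecheck G(-s)\, ds,
\]
where $\Phi(s) := \mathcal{N}^s\,[L(1/2+s,\tilde\pi_\infty) / L(1/2-s,\pi_\infty)]\, C^{-s/2}$ satisfies $\Phi(0)=\lambda$.

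The heart of the argument is a Taylor-type expansion of $\Psi(s) := \Phi(s)/(\lambda C^{s/2})$ about $s=0$. Each gamma factor contributes, via Stirling's asymptotic, a ratio $\Gamma(\ov{\eta_j}+s/2)\Gamma(\eta_j) / [\Gamma(\eta_j-s/2)\Gamma(\ov{\eta_j})]$, in which the leading $|\eta_j|^s$ cancels against the $\prod_j |\eta_j|^{-s}$ that arises from $\mathcal{N}^s C^{-s/2}$. This yields
\[
\Psi(s) = \sum_{\substack{|\bm n|<M \\ \ell<|\bm n|+M}} p_{\bm n,\ell}\, \bm\eta^{-\bm n}\, s^\ell + O\bigl(\eta^{-M}(1+|s|)^{O(M)}\bigr)
\]
uniformly on $|s|\le\eta^{1/2}$, with explicit rational $p_{\bm n,\ell}$; regularity near $s=0$ is guaranteed by \eqref{LRS}. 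One then chooses the constants $c_{\bm n,\ell}$ in \eqref{Gdef} so that, using the antisymmetry $\widecheck{G_0}(-s)=-\widecheck{G_0}(s)$ (a consequence of $G_0(x)+G_0(1/x)=1$), the product $\Phi(s)\widecheck G(-s)$ matches $-\lambda\, C^{s/2}$ times the Mellin transform of $\ov G$ evaluated at $s$, up to an error of size $O(\eta^{-M}(1+|s|)^{O(M)})$ on the contour. The dual integral accordingly collapses into $\kappa\lambda\sum_n \ov{a_n}\, n^{-1/2}\,\ov{G(n/\sqrt C)}$ plus an error integral.

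Part (a) follows directly from \eqref{Gdef}: since $(x\partial_x)^\ell G_0$ has entire Mellin transform $(-s)^\ell\widecheck{G_0}(s)$, $\widecheck G$ inherits the simple pole of $\widecheck{G_0}$ at $s=0$ with residue $1$, while each correction term contributes a factor $\bm\eta^{-\bm n}\ll\eta^{-1}$ to the subsequent Laurent coefficients; the rapid decay of $G$ and $\widecheck G$ follows from that of $G_0$ by repeated integration by parts. For the error in (b), the convexity estimate $L(1/2+s,\tilde\pi)\ll C^{1/4+\eps}(1+|s|)^{O(1)}$, the rapid decay of $\widecheck{G_0}$ on vertical lines, and Molteni's bound \eqref{molteni} together yield the advertised $O(\eta^{-M}C^{1/4+\eps})$.

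The main obstacle will be the combinatorial bookkeeping in the Taylor expansion of $\Psi$: verifying that the index ranges $|\bm n|<M$ and $\ell<|\bm n|+M$ in \eqref{Gdef} exhaust every term not controlled by the $O(\eta^{-M})$ remainder. The asymmetric upper bound on $\ell$ arises because products of low-order Stirling corrections from some factors with high-order terms from others permit monomials $s^\ell$ with $\ell$ exceeding $|\bm n|$, and the chosen cutoff is precisely what forces the remaining tail to carry a factor $\eta^{-M}$.
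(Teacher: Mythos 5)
Your plan is correct in outline and shares the paper's key technical ingredient, namely an asymptotic (Stirling/digamma) expansion of the archimedean factor ratio in powers of $\bm\eta^{-\bm n}s^\ell$, used to construct the correction terms in \eqref{Gdef}. But the packaging is genuinely different, and the difference conceals a step you have not justified. The paper does \emph{not} derive the approximate functional equation by contour shift: it invokes the exact formula from \cite[Theorem~2.1]{Ha}, which, thanks to the symmetrized function $F(s,\pi_\infty)=\frac{1}{2}C^{-s/2}\mathcal N^s\frac{L(1/2+s,\pi_\infty)L(1/2,\tilde\pi_\infty)}{L(1/2-s,\tilde\pi_\infty)L(1/2,\pi_\infty)}+\frac{1}{2}C^{s/2}$, produces a single weight $W$ appearing identically (up to complex conjugation) in both sums. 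The only remaining task is the single approximation $W(x)=G(x)+O(\eta^{-M})$, which reduces to showing $C^{-it/2}F(it,\pi_\infty)=1+\sum c_{\bm n,\ell}\bm\eta^{-\bm n}(-it)^\ell+O(\eta^{-M}(|t|+|t|^{2M}))$ on the line $\Re s=0$, and which is carried out in two clean steps (Bernoulli/digamma expansion, then the Taylor polynomial of $e^{z}$), with the range $|t|\ge\eta$ handled by observing the approximation is trivially true there for any choice of constants.

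Your contour-shift route, by contrast, forces you to verify a matching condition that couples the unknowns on both sides: you need $\Psi(s)\widecheck G(-s)\approx-\widecheck{\ov G}(s)$, where by \eqref{MellinGdef} and the antisymmetry of $\widecheck G_0$ one has $\widecheck G(-s)=-(1+\sum c_{\bm n,\ell}\bm\eta^{-\bm n}s^\ell)\widecheck G_0(s)$ while $\widecheck{\ov G}(s)=(1+\sum c_{\ov{\bm n},\ell}\bm\eta^{-\bm n}(-s)^\ell)\widecheck G_0(s)$. Writing $\Psi(s)=1+R(s)+O(\eta^{-M})$, the matching becomes the nonlinear system $R(1+P)=Q-P+O(\eta^{-M})$ with $P=\sum c_{\bm n,\ell}\bm\eta^{-\bm n}s^\ell$ and $Q$ its conjugate-swapped, $s\mapsto-s$ image. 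Solvability with \emph{rational} $c_{\bm n,\ell}$ is not a free choice; it rests on the unimodularity of $\Psi$ on $\Re s=0$, which forces consistency relations such as $p_{\bm n,\ell}+(-1)^\ell p_{\ov{\bm n},\ell}=0$ among the Taylor coefficients of $\Psi$. You correctly flag this bookkeeping as ``the main obstacle'' but leave it as an assertion, and it is precisely the step where your route costs more than the paper's. Two further omissions: (i) you assert the expansion of $\Psi$ only on $|s|\le\eta^{1/2}$, but the Mellin integral is over a full vertical line, so you must add the observation (as the paper does for $|t|\ge\eta$) that the approximation is automatically satisfied for large $|s|$ because both sides are swallowed by the error bound once $\eta^{-M}$ is multiplied by a fixed power of $|s|$; (ii) the error budget in part~(b) needs the discrepancy to be polynomially bounded in $|s|$ so it is absorbed by the rapid decay of $\widecheck G_0$, which is why the paper records the explicit shape $O(\eta^{-M}(|t|+|t|^{2M}))$ rather than a bare $O(\eta^{-M})$.
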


\begin{remark}\label{selfdual} For self-contragradient representations $\pi$ we can strengthen Proposition~\ref{prop1} by imposing the additional symmetry $c_{\ov{\bm n},\ell}=c_{\bm n,\ell}$ for the involution
$\bm n\mapsto\ov{\bm n}$ defined by ${\bm\eta}^{-\ov{\bm n}}={\ov{\bm\eta}}^{\,-\bm n}$, that is $\ov{\bm n}(2j-1):=\bm n(2j)$ and $\ov{\bm n}(2j):=\bm n(2j-1)$. Then $G$ defined by \eqref{Gdef} is real-valued and the two main terms in \eqref{approxfuncteq} are identical. Indeed, in the self-contragradient
situation $\kappa=\lambda=1$ and the coefficients $a_n$ are real, hence we can replace the original $G$ by its real part without affecting the validity of \eqref{laurent}--\eqref{approxfuncteq}. This corresponds to replacing
the original coefficients $c_{\bm n,\ell}$ by $(c_{\bm n,\ell}+c_{\ov{\bm n},\ell})/2$, upon noting that the original coefficients are rational.
\end{remark}

\begin{proof} We follow closely the proof of \cite[Theorem~2.5]{Ha}. We can write $G_0$ as an inverse Mellin transform
\begin{equation}\label{G0def}
G_0(x)=\frac{1}{2\pi i}\int_{(\sigma)}x^{-s}H(s)\frac{ds}{s},\qquad\sigma>0,
\end{equation}
where $H(s):=s\widecheck G_0(s)$ extends to an entire functions satisfying
\begin{equation}\label{Hfeature}
H(0)=1,\qquad H(s)=H(-s)=\ov{H(\ov{s})},\qquad H(s) \ll (1+|s|)^{-A}\ \text{for $|\Re s|<\sigma_0$}.
\end{equation}
As in the Erratum of \cite{Ha} we define
\begin{displaymath}
  F(s, \pi_{\infty}) := \frac{1}{2} C^{-s/2}\mathcal{N}^s \frac{L(1/2+s, \pi_{\infty}) L(1/2, \tilde{\pi}_{\infty})}{L(1/2-s, \tilde{\pi}_{\infty})L(1/2, \pi_{\infty})} + \frac{1}{2} C^{s/2},
\end{displaymath}
then by the proof of \cite[Theorem~2.1]{Ha} we have the exact formula
\[L(1/2, \pi) = \sum_{n=1}^{\infty} \frac{a_n}{\sqrt{n}} W\left(\frac{n}{\sqrt{C}}\right) + \kappa\lambda \ov{\sum_{n=1}^{\infty} \frac{a_n}{\sqrt{n}} W\left(\frac{n}{\sqrt{C}}\right)}\]
with the weight function
\[W(x)=\frac{1}{2\pi i}\int_{(\sigma)}x^{-s}C^{-s/2}F(s,\pi_\infty)H(s)\frac{ds}{s},\qquad\sigma>0.\]
Now the idea is to approximate $W$ by a function $G$ of the form (cf.\ \eqref{fateta})
\[G(x)=\frac{1}{2\pi i}\int_{(\sigma)}x^{-s}
\biggl(1 + \sum_{\substack{0< |{\bm n}| < M \\ 0< \ell < |{\bm n}|+M}} c_{{\bm n},\ell} {\bm\eta}^{-\bm n}(-s)^\ell\biggr)H(s)\frac{ds}{s},\qquad\sigma>0,\]
which, by \eqref{G0def}, is precisely the function defined in \eqref{Gdef}. In fact
\begin{equation}\label{MellinGdef}
\widecheck G(s) = \biggl(1 + \sum_{\substack{0< |{\bm n}| < M \\ 0< \ell < |{\bm n}|+M}} c_{{\bm n},\ell} {\bm\eta}^{-\bm n}(-s)^\ell\biggr)\widecheck G_0(s),
\end{equation}
hence \eqref{laurent} and \eqref{boundsG} are immediate from \eqref{Hfeature} and $\widecheck G_0(s)=H(s)/s$.
The rapid decay of $W$ together with \eqref{molteni} implies \eqref{approxfuncteq} as soon as the constants $c_{{\bm n}, \ell}$ are chosen such that
\[W(x) = G(x) + O_{M,m,d}(\eta^{-M}).\]
We shall derive this from
\[W(x)-G(x)=\frac{1}{2\pi i}\int_{(0)}x^{-s}
\biggl(C^{-s/2}F(s,\pi_\infty)-1 - \sum_{\substack{0< |{\bm n}| < M \\ 0< \ell < |{\bm n}|+M}} c_{{\bm n},\ell} {\bm\eta}^{-\bm n}(-s)^\ell\biggr)H(s)\frac{ds}{s}\]
by showing that for explicitly computable rational constants $c_{{\bm n},\ell} \in \QQ$ we have
\begin{equation}\label{asymp}
  C^{-it/2}F(it, \pi_{\infty}) = 1 + \sum_{\substack{0< |{\bm n}| < M \\ 0< \ell < |{\bm n}|+M}} c_{{\bm n},\ell} {\bm\eta}^{-\bm n}(-it)^\ell + O_{M,m,d}\left( \eta^{-M}\left(|t|+|t|^{2M}\right)\right),\quad t\in\RR.
\end{equation}
Note that this strengthens \cite[Lemma~4.1]{Ha}.

In proving \eqref{asymp} we can restrict to the range $|t|<\eta$,
because for $|t|\geq\eta$ the approximation is trivial for any constants $c_{{\bm n}, \ell}$. Indeed, $\eta\gg_{m}1$ by \eqref{LRS}, hence we have
\[1\ll_{M,m}|t|^{M}\leq\eta^{-M}|t|^{2M},\] and similarly for
$0< |{\bm n}| < M$ and $0< \ell < |{\bm n}| +M$ we have
\[c_{{\bm n}, \ell} {\bm\eta}^{-\bm n}(-it)^\ell\ll_{M,m,d}\eta^{-|{\bm n}|}|t|^\ell\ll_{M,m}\eta^{-|{\bm n}|}|t|^{|\bm n|+M}\leq\eta^{-M}|t|^{2M}.\]
Let us now assume that $|t|<\eta$, then our starting point is the identity
\begin{align*}
C^{-it/2}F(it, \pi_{\infty}) &= \frac{1}{2} + \frac{1}{2}\prod_{j=1}^{md} \left|\frac{1}{4}+\frac{\mu_j}{2}\right|^{-it}\frac{\Gamma(\frac{1}{4}+\frac{\mu_j}{2}+\frac{it}{2})\Gamma(\frac{1}{4} +\frac{\ov{\mu_j}}{2}) }{\Gamma(\frac{1}{4}+\frac{\ov{\mu_j}}{2} -\frac{it}{2})\Gamma(\frac{1}{4} +\frac{\mu_j}{2})}  \\
& = \frac{1}{2} + \frac{1}{2}\exp\left\{2 i \Re \sum_{j=1}^{md}\int_0^{t/2} \left(\frac{\Gamma'}{\Gamma}\left(\eta_j + i\tau\right) - \log\eta_j\right)  d\tau\right\}.
\end{align*}
In the integral we have $|i\tau| < \eta/2$ and $|\eta_j + i\tau| > \eta/2$, hence the well-known asymptotic expansion \begin{displaymath}
  \frac{\Gamma'}{\Gamma}(z) = \log z - \sum_{0< n < M} \frac{B_n}{nz^n} + O_{\sigma, M}(|z|^{-M}), \qquad \Re z \geq \sigma > 0,
\end{displaymath}
where $B_n\in\QQ$ is the $n$-th Bernoulli number (cf.\ \cite[8.361.8]{GR}), yields
\[\int_0^{t/2} \left(\frac{\Gamma'}{\Gamma}\left(\eta_j + i\tau\right) - \log\eta_j\right) d\tau=\sum_{\substack{0<n<M\\1\leq\ell\leq n+1}} b_{n,\ell}\eta_j^{-n}i^{\ell-1}t^\ell+O_M\left(\eta^{-M}\left(|t|+|t|^{M+1}\right)\right)\]
for some constants $b_{n,\ell}\in\QQ$. It follows that
\begin{align*}
2i\Re\sum_{j=1}^{md}\int_0^{t/2} &\left(\frac{\Gamma'}{\Gamma}\left(\eta_j + i\tau\right) - \log\eta_j\right)  d\tau\\
&=\sum_{\substack{0< n<M\\1\leq\ell\leq n+1}} b_{n,\ell}(it)^\ell
\sum_{j=1}^{md}\left(\eta_j^{-n}+(-1)^{\ell-1}\ov{\eta_j}^{\,-n}\right)
+O_{M,m,d}\left(\eta^{-M}\left(|t|+|t|^{M+1}\right)\right).\end{align*}
Finally we approximate uniformly the exponential function {\it for
imaginary arguments} by its Taylor polynomial of degree $M-1$ to arrive at \eqref{asymp} for some
$c_{{\bm n},\ell}\in\QQ$ depending only on ${\bm n}$, $\ell$, $M$, $m$, $d$. Here we use that for $M\leq |{\bm n}|\leq M^2$ and $1\leq\ell\leq |{\bm n}|+M$ we have
\[\eta^{-|{\bm n}|}|t|^\ell\leq\eta^{-|{\bm n}|}\left(|t|+|t|^{|{\bm n}|+M}\right)\ll_{M,m}\eta^{-M}\left(|t|+|t|^{2M}\right),\]
because $\eta\gg_m 1$ by \eqref{LRS} and $|t|<\eta$.
\end{proof}

We shall fix once and for all a weight function $G_0:(0,\infty)\to\RR$ as in Proposition~\ref{prop1}, so we will not
display the dependence of our statements on this function. We shall fix $M\in\NN$ later in the paper. For $t$ real and $k\geq 2$ an integer
we shall apply Proposition~\ref{prop1} to the particular $L$-function
\begin{equation*}
  \mathfrak{L}_{t,j,k}(s) := L(s + it, f_{j, k} \otimes g)\ov{L(\ov{s} + it, f_{j, k} \otimes g)}.
\end{equation*}
This $L$-function is associated with a self-contragradient representation $\pi$, namely
the isobaric sum of $f_{j, k} \otimes g\otimes|\det|^{it}$ and its contragradient. According to
Remark~\ref{selfdual} we can and we shall assume that $G$ is real-valued, then \eqref{approxfuncteq} becomes
\begin{equation}\label{approxfuncteq2}
|L(1/2 + it, f_{j, k} \otimes g)|^2=2\sum_{n=1}^{\infty} \frac{a_n}{\sqrt{n}} G\left(\frac{n}{\sqrt{C}}\right)+O\bigl(\eta^{-M}C^{1/4+\eps}\bigr).
\end{equation}
We now compute all the necessary data occurring in this situation.  We have
\begin{equation}\label{para1}
  m=8, \qquad d=1, \qquad   \mathcal{N} = N^4.
\end{equation}
Using \eqref{RS}, we find that the Dirichlet coefficients of $\mathfrak{L}_{t,j,k}(s)$ are given by
\begin{equation}
  \quad a_n = \sum_{d_1^2d_2^2m_1m_2= n} \chi(d_1)\ov{\chi(d_2)}  d_1^{-2it}d_2^{2it}\lambda_{j, k}(m_1)\ov{\lambda_{j, k}(m_2)}\lambda_g(m_1)\ov{\lambda_g(m_2)}m_1^{-it}m_2^{it},
\end{equation}
so that \eqref{molteni} is satisfied by previous remarks or by standard bounds. There are constants
$\nu_1,\dots,\nu_4\in\CC$ depending only on $g$ such that the archimedean parameters $\mu_1,\dots,\mu_8\in\CC$ of $\mathfrak{L}_{t,j,k}(s)$ are given by
the numbers $\frac{k-1}{2}+\nu_j+it$ and their complex conjugates. Hence we have
\begin{equation}\label{para3}
\begin{gathered}
\eta_j = \frac{k}{4} +\frac{\Re\nu_j}{2}\pm \frac{i(\Im\nu_j+t)}{2} , \qquad \eta \asymp k+|t|,
\\ C = C_{t, k} = \frac{N^4}{(2\pi)^8}\prod_{j=1}^4 \left|\frac{k}{2} + \nu_j+it\right|^2 \asymp (k+|t|)^8.
\end{gathered}
\end{equation}

\subsection{Voronoi summation}

The purpose of this section is to compile summation formulae for the Hecke eigenvalues $\lambda_g(n)$
twisted by a finite order additive character. This generalizes Voronoi's original formula for the
divisor function (without twist).

\begin{prop}\label{voro} Let $a$ and $c$ be coprime positive integers, and let $F:(0,\infty)\to\CC$ be a smooth function of compact support. Then
\begin{equation}\label{vor}
c\sum_{n=1}^\infty \lambda_g(n)e\biggl(n\frac{a}{c}\biggr) F(n) =
\sum_{n=1}^\infty \lambda_{g}(n) \sum_{\pm} e\biggl(\mp n \frac{\ov{a}}{c}\biggr) \int_{0}^{\infty} F(x)\, J_g^{\pm}\left(\frac{4\pi \sqrt{nx}}{c}\right)dx,
\end{equation}
where
\begin{displaymath}
  J_g^{+}(x) := 2\pi i^k J_{k-1}(x), \qquad J_g^{-}(x) := 0
\end{displaymath}
if $g$ is a holomorphic cusp form of level $1$ and weight $k$;
\begin{equation}\label{Jdef}
J_g^{+}(x) := \frac{-\pi}{\cosh(\pi r) }\bigl(Y_{2 i r}(x) + Y_{-2 i r}(x)\bigr), \qquad J_g^-(x) :=  \epsilon_g 4 \cosh(\pi r) K_{2ir}(x)
\end{equation}
if $g$ is a Maa{\ss} cusp form of level $1$, weight $0$, Laplacian eigenvalue $1/4+r^2$, and sign $\epsilon_g \in \{\pm 1\}$. For $g = E_r$ $(r\in\RR)$ the same formula holds with $J_g^{\pm}$ as in the Maa{\ss} case (with $\epsilon_g = 1$), except that on the right-hand side the following polar term has to be added:
\begin{align}
\label{vorpolar}
&&&&&&&\sum_{\pm}\zeta(1\pm 2ir)\int_0^{\infty} \left(\frac{x}{c^2}\right)^{\pm ir}F(x)\,dx& &\text{for $r\neq 0$,}&&&&&&\\[4pt]
\label{vorpolar2} &&&&&&&\int_0^{\infty}\left(\log\left(\frac{x}{c^2}\right) + 2\gamma\right)\,F(x)\,dx& &\text{for $r=0$.}&&&&&&
\end{align}
\end{prop}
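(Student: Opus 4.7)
My plan is to derive \eqref{vor} by Mellin inversion, the functional equation of an additively twisted $L$-function, a contour shift, and an archimedean kernel identification. First, writing $\widetilde F(s):=\int_0^\infty F(x)x^{s-1}\,dx$ and applying Mellin inversion, I would obtain, for $\sigma$ sufficiently large,
\[\sum_{n=1}^\infty \lambda_g(n)\,e(na/c)\,F(n) = \frac{1}{2\pi i}\int_{(\sigma)} L_g(s, a/c)\,\widetilde F(s)\,ds,\]
where $L_g(s,a/c):=\sum_n \lambda_g(n)e(na/c)n^{-s}$. Rapid decay of $\widetilde F$ on vertical lines (from smoothness and compact support of $F$) would justify every interchange of sum and integral that follows.

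Second, I would invoke the functional equation of $L_g(s,a/c)$. In the holomorphic and Maass cases at level one it arises from applying $\gamma=\left(\begin{smallmatrix}\ov{a} & *\\ c & -a\end{smallmatrix}\right)\in\SL_2(\ZZ)$, which interchanges the cusp $a/c$ with $-\ov{a}/c$; for $g=E_r$ it is the Estermann functional equation for additively twisted sums of $\sigma_{2ir}(n)$. In every case the equation takes the schematic form
\[L_g(s,a/c)=c^{1-2s}\sum_{\pm}\gamma_g^{\pm}(s)\,L_g\bigl(1-s,\mp \ov{a}/c\bigr),\]
with explicit ratios of gamma factors $\gamma_g^{\pm}(s)$, and with $\gamma_g^{-}\equiv 0$ in the holomorphic case.

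Third, I would shift the contour to $\Re s=1-\sigma$. For cuspidal $g$, $L_g(s,a/c)$ is entire and no residues appear. For $g=E_r$ with $r\neq 0$, $L_g(s,a/c)$ has simple poles at $s=1\pm ir$ coming from the Estermann $D$-series, and their residues contribute (up to the factor $c$ on the left of \eqref{vor}) exactly \eqref{vorpolar}. For $r=0$ the two poles collide into a double pole at $s=1$, and the Laurent expansion $\zeta(1+2\delta)=1/(2\delta)+\gamma+O(\delta)$ as $\delta\to 0$ produces the logarithmic kernel \eqref{vorpolar2}. On the shifted contour, applying the functional equation, expanding $L_g(1-s,\mp\ov{a}/c)$ as a Dirichlet series, and interchanging sum and integral yield the right-hand side of \eqref{vor}, provided the inner integrals
\[\frac{1}{2\pi i}\int_{(1-\sigma)}\widetilde F(s)\,c^{1-2s}n^{s-1}\gamma_g^{\pm}(s)\,ds\]
coincide with $\int_0^\infty F(x)\,J_g^{\pm}\bigl(4\pi\sqrt{nx}/c\bigr)\,dx$.

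The main obstacle will be this last archimedean identification. It reduces to computing the Mellin transforms of $J_{k-1}$, $Y_{\pm 2ir}$, and $K_{2ir}$ (which are standard and tabulated) and matching them, via the duplication and reflection identities for $\Gamma$, with the gamma-factor ratios $\gamma_g^{\pm}(s)$. The cuspidal case is classical; the new ingredient is the Eisenstein case, where careful bookkeeping of the residues at $s=1\pm ir$ and of the coalescence as $r\to 0$ will be needed to recover the precise shape of \eqref{vorpolar} and \eqref{vorpolar2}.
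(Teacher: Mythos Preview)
Your approach is correct and coincides with the paper's own proof: Mellin inversion to express the left side as a contour integral of $\widecheck F(s)$ against the twisted Dirichlet series $D(g,a/c,s)$, a contour shift picking up the polar terms in the Eisenstein case (the residues at $s=1\pm ir$ being $\zeta(1\pm 2ir)/c^{1\pm 2ir}$, which after multiplying by $c$ give precisely \eqref{vorpolar}), application of the functional equation of $D(g,a/c,s)$, and identification of the resulting archimedean integrals with the Bessel kernels $J_g^\pm$ via tabulated Mellin transforms. The paper carries out the argument in detail only for $g=E_r$ with $r\neq 0$ (citing Meurman, Jutila, and Duke--Iwaniec for the cuspidal and $r=0$ cases), and records the $r=0$ polar term as the $r\to 0$ limit of \eqref{vorpolar}, exactly as you suggest.
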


\begin{proof} For Maa{\ss} cusp forms the result is due to Meurman~\cite[Theorem~2]{Me}; for holomorphic
cusp forms it is due to Jutila~\cite[Section~1.9]{Ju2} and Duke--Iwaniec~\cite[Theorem~4]{DI}; for $g=E_0$ it is due to Jutila~\cite[Theorem~4]{Ju}; for $g=E_r$ $(r\neq  0)$ we provide the proof below.

Let $g=E_r$ $(r\neq  0)$, then the Hecke eigenvalues can be explicitly described as
\[\lambda_g(n)=\sum_{ab=n}\left(\frac{a}{b}\right)^{ir}.\]
We shall derive the Voronoi formula from analytic properties of the Dirichlet series
\[D(g,x,s):=\sum_{n=1}^\infty \lambda_g(n) e(nx) n^{-s},\qquad\Re s>1,\]
summarized by the following lemma.

\begin{lemma}\label{Dlemma} For $(a,c)=1$ the Dirichlet series $D\left(g,\frac{a}{c},s\right)$ can be analytically continued to a meromorphic function which is holomorphic in the whole complex plane up to simple poles at $s=1\pm ir$, where the residues equal $\zeta(1\pm 2ir)/c^{1\pm 2ir}$. It satisfies the functional equation
\begin{align}\label{Dfunct}
D\left(g,\frac{a}{c},s\right)=
&\pi^{-1}2^{2s-1}\left(\frac{c}{\pi}\right)^{1-2s}\Gamma(1-s+ir)\Gamma(1-s-ir)\\
\notag&\times
\left\{\cos(\pi ir)D\left(g,\frac{\ov a}{c},1-s\right)-\cos(\pi s)D\left(g,-\frac{\ov a}{c},1-s\right)\right\}.
\end{align}
\end{lemma}

\begin{remark}
The limiting case $r\to 0$ of this lemma is due to Estermann~\cite{Es}
and served for Jutila as the starting point for his Voronoi formula, see \cite[Lemma~1]{Ju}.
Correspondingly, the limit of \eqref{vorpolar} under $r\to 0$ equals \eqref{vorpolar2}.
\end{remark}

\begin{proof}[Proof of Lemma~\ref{Dlemma}] The result follows from \cite[Lemma~3.7]{Mot} upon noting that
the Dirichlet series $D\left(g,\frac{a}{c},s\right)$ is a shift of a so-called
Estermann zeta-function:
\[D\left(g,\frac{a}{c},s-ir\right)=\sum_{n=1}^\infty\sigma_{2ir}(n)e\left(\frac{an}{c}\right)n^{-s}.\]
The proof of \cite[Lemma~3.7]{Mot} is based on analytic properties of the Hurwitz zeta-function. In the Appendix
we provide an alternate proof of Lemma~\ref{Dlemma}, based on the modularity of $g=E_r$, which displays the similarity with the cuspidal case.
\end{proof}

With Lemma~\ref{Dlemma} at hand it is straightforward to deduce \eqref{vor} with the additional polar term \eqref{vorpolar} on the right hand side. The left hand side of \eqref{vor} equals
\[c\sum_{n=1}^\infty \lambda_g(n)e\biggl(n\frac{a}{c}\biggr) F(n) =
\frac{c}{2\pi i}\int_{(2)}\widecheck F(s)\,D\left(g,\frac{a}{c},s\right)ds,\]
where $\widecheck F$ denotes the Mellin transform of $F$. We shift the contour to the vertical line at $-1$ and record the contribution of the residues at $s=1\pm ir$; we obtain
\[c\sum_{n=1}^\infty \lambda_g(n)e\biggl(n\frac{a}{c}\biggr) F(n) = \sum_{\pm}\frac{\zeta(1\pm 2ir)}{c^{\pm 2ir}}\widecheck F(1\pm ir)+
\frac{c}{2\pi i}\int_{(-1)}\widecheck F(s)\,D\left(g,\frac{a}{c},s\right)ds.\]
We observe that the sum on the right hand side equals \eqref{vorpolar}, hence we are left with proving that the remaining term equals the right hand side of \eqref{vor}. By \eqref{Dfunct} the term in question can be rewritten as
\begin{align*}\frac{1}{\pi i}&\int_{(-1)}\widecheck F(s)\left(\frac{c}{2\pi}\right)^{2-2s}
\Gamma(1-s+ir)\Gamma(1-s-ir)\\&\times
\left\{\cos(\pi ir)D\left(g,\frac{\ov a}{c},1-s\right)-\cos(\pi s)D\left(g,-\frac{\ov a}{c},1-s\right)\right\}ds.\end{align*}
We apply the change of variable $s\to 1-\frac{s}{2}$ and unfold the series $D\left(g,\pm\frac{a}{c},s\right)$.
By absolute convergence we see that the previous display equals
\begin{align*}
&\sum_{n=1}^\infty \lambda_{g}(n) e\biggl(+n \frac{\ov{a}}{c}\biggr)\frac{1}{2\pi i}
\int_{(2)}\left(\frac{2\pi\sqrt{n}}{c}\right)^{-s}\ \cos(\pi ir)\ \Gamma\left(\frac{s+2ir}{2}\right)\Gamma\left(\frac{s-2ir}{2}\right)\widecheck F\left(1-\frac{s}{2}\right)ds\\
-&\sum_{n=1}^\infty \lambda_{g}(n) e\biggl(-n \frac{\ov{a}}{c}\biggr)\frac{1}{2\pi i}
\int_{(2)}\left(\frac{2\pi\sqrt{n}}{c}\right)^{-s}\cos\left(\frac{\pi s}{2}\right)\Gamma\left(\frac{s+2ir}{2}\right)\Gamma\left(\frac{s-2ir}{2}\right)\widecheck F\left(1-\frac{s}{2}\right)ds.
\end{align*}
By \cite[6.8.17~\&~6.8.26]{Er} and \eqref{Jdef} this is the same as
\[\sum_{n=1}^\infty \lambda_{g}(n) \sum_{\pm} e\biggl(\mp n \frac{\ov{a}}{c}\biggr)
\frac{1}{2\pi i}
\int_{(2)}\left(\frac{4\pi\sqrt{n}}{c}\right)^{-s}\widecheck{J_g^{\pm}}(s)\,\widecheck F\left(1-\frac{s}{2}\right)ds,\]
which is precisely the right hand side of \eqref{vor}, thanks to the following identity for $t>0$:
\begin{align*}
\frac{1}{2\pi i}\int_{(2)}t^{-s}\widecheck{J_g^{\pm}}(s)\,\widecheck F\left(1-\frac{s}{2}\right)ds
&=\frac{1}{2\pi i}\int_{(2)}t^{-s}\widecheck{J_g^{\pm}}(s)\left\{\int_0^\infty F(x)x^{-s/2}\,dx\right\}ds\\
&=\int_0^\infty F(x)\left\{\frac{1}{2\pi i}\int_{(2)}\left(t\sqrt{x}\right)^{-s}\widecheck{ J_g^{\pm}}(s)\,ds\right\}dx\\
&=\int_0^\infty F(x)\,J_g^\pm\left(t\sqrt{x}\right)dx.
\end{align*}
The proof of Proposition~\ref{voro} is complete.
\end{proof}

\subsection{Bessel functions}

We start with some standard bounds; more precise results can be found in \cite[Appendix]{HM}.
It follows from the power series expansion that
\begin{equation}\label{Besselsmall}
  J_{k-1}(x) \ll \frac{x^{k-1}}{\Gamma(k)}, \qquad 0<x \leq 1,
\end{equation}
uniformly in $k \in \NN$. It follows from the asymptotic formula and the power series expansion that
\begin{equation}\label{Bessellarge}
  J_{k-1}(x),\ Y_{2 ir}(x) \ll x^{-1/2}, \qquad x > 0,
\end{equation}
for fixed $k\in\NN$ and $r \in \RR$. Finally we recall the bound
\begin{equation}\label{besselK}
  K_{2 ir}(x)  \ll e^{-x}, \qquad x \geq 1,
\end{equation}
for fixed $r \in \RR$, which follows again from the asymptotic formula.

For a smooth function $h:(0,\infty)\to\CC$ of compact support let
\begin{equation*}
  \widehat{h}(t) := \int_{0}^{\infty} h(x) e(xt) \,dx, \qquad \tilde{h}(t) := \frac{1}{\sqrt{2\pi}} \int_0^{\infty} h(\sqrt{x}) e\left(\frac{xt}{2\pi}\right) \frac{dx}{\sqrt{x}}.
\end{equation*}
Then the following summation formula holds.

\begin{lemma}\label{lem2} Let $K>0$, $\xi>0$, and $h:(0,\infty)\to\RR$ a smooth function of compact support. Then
\begin{displaymath}
\begin{split}
\sum_{k \equiv 0 \, (2)} i^{-k}h\left(\frac{k-1}{K}\right) J_{k-1}(\xi) =
-\frac{K}{2\sqrt{\xi}} \Im\left(e\left(\frac{\xi}{2\pi} -\frac{1}{8}\right)\tilde{h}\left(\frac{K^2}{2\xi}\right)\right) + O\left( \frac{\xi}{K^4} \int_{-\infty}^{\infty} |\widehat{h}(t) t^4| \,dt \right)
\end{split}
\end{displaymath}
with an absolute implied constant.
\end{lemma}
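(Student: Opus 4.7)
The plan is to start from the integer-order integral representation
\[J_{k-1}(\xi)=\frac{1}{2\pi}\int_{-\pi}^{\pi}e^{-i(k-1)\theta+i\xi\sin\theta}\,d\theta,\]
substitute it into the sum, swap the order of summation and integration, and apply Poisson summation to the resulting exponential sum in the even parameter $k$. Writing $k=2l$ and using that $\mathrm{supp}\,h\subset(0,\infty)$ to extend the summation to all $l\in\ZZ$, the Poisson formula converts the inner sum over $l$ into
\[\frac{K}{2}\sum_{m\in\ZZ}e^{i\omega_m}\widehat h\!\left(\frac{K\omega_m}{2\pi}\right),\qquad\omega_m:=\tfrac{\pi}{2}-\theta-\pi m.\]
Reinserting the Bessel integral and collecting the phase factor $e^{i\theta}e^{i\omega_m}=i(-1)^m$, the left-hand side of the lemma takes the form
\[\frac{iK}{4\pi}\sum_{m\in\ZZ}(-1)^m\int_{-\pi}^{\pi}e^{i\xi\sin\theta}\,\widehat h\!\left(\tfrac{K\omega_m}{2\pi}\right)d\theta.\]

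Because $\widehat h$ is Schwartz, only those $m$ for which $\omega_m$ has a zero inside $[-\pi,\pi]$ contribute more than $O(K^{-A})$; these are precisely $m=0$ (pinning $\theta=\pi/2$) and $m=1$ (pinning $\theta=-\pi/2$), and these coincide with the two stationary points of $\sin\theta$. For each I would change variable $\theta=\pm\pi/2\mp 2\pi u/K$, Taylor-expand $\cos(2\pi u/K)=1-2\pi^2u^2/K^2+O(u^4/K^4)$, extend the truncated integration to all of $\RR$ at the cost of a negligible error, and evaluate the resulting Fresnel integral by completing the Gaussian. A direct calculation---insert the definition of $\widehat h$, interchange the Gaussian integration inside, and use $\mathrm{supp}\,h\subset(0,\infty)$---yields
\[\int_{-\infty}^{\infty}e^{-2\pi^2 i\xi v^2/K^2}\,\widehat h(v)\,dv\;=\;\frac{K}{2\sqrt\xi}\,e^{-i\pi/4}\,\tilde h\!\left(\tfrac{K^2}{2\xi}\right),\]
together with the complex-conjugate identity for the opposite sign in the phase. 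Combining the $m=0$ and $m=1$ pieces, the factor $(-1)^m$ and the conjugation assemble them into $\tfrac{iK}{4\sqrt\xi}(z-\bar z)=-\tfrac{K}{2\sqrt\xi}\Im(z)$ with $z:=e(\xi/(2\pi)-\tfrac18)\,\tilde h(K^2/(2\xi))$, which is exactly the main term in the lemma, since $e^{i(\xi-\pi/4)}=e(\xi/(2\pi)-\tfrac18)$.

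The principal obstacle, and the source of the explicit error, is the quadratic truncation of the phase. Writing $e^{i\xi\cos(2\pi u/K)}=e^{i\xi}e^{-2\pi^2i\xi u^2/K^2}e^{i\eta(u)}$ with $\eta(u)=O(\xi u^4/K^4)$ and invoking the universal bound $|e^{i\eta}-1|\le|\eta|$, the pointwise deviation is $O(\xi u^4/K^4)$, and integrating against $|\widehat h(u)|$ produces exactly the stated $O\bigl(\xi K^{-4}\int|\widehat h(t)t^4|\,dt\bigr)$; the terms $m\notin\{0,1\}$ and the contributions from outside the stationary-phase windows decay like $K^{-A}$ and are absorbed. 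The somewhat delicate step is the sign and phase bookkeeping: tracking $(-1)^m$, the Fresnel phase $\mp i\pi/4$, and the sign flip of $\sin\theta$ at $\theta=-\pi/2$ so that the two stationary-phase contributions assemble into a single imaginary part with the sharp constant rather than leaving spurious real or cross terms.
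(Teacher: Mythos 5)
Your argument is correct, and it reconstructs the standard derivation underlying the sources the paper actually cites for this lemma (Iwaniec's Lemma~5.8 in \emph{Topics in classical automorphic forms} and Khan's Lemma~2.3) rather than a proof spelled out in the paper itself. The chain Bessel-integral representation $\to$ Poisson in $l$ with $k=2l$ $\to$ stationary phase at $\theta=\pm\pi/2$ $\to$ Gaussian completion is exactly the mechanism in those references, and your computation of the Fresnel identity $\int_{-\infty}^{\infty}e^{-2\pi^2 i\xi v^2/K^2}\widehat h(v)\,dv=\frac{K}{2\sqrt\xi}e^{-i\pi/4}\tilde h(K^2/(2\xi))$ (via $\int_0^\infty h(y)e^{iy^2t}\,dy=\sqrt{\pi/2}\,\tilde h(t)$), the phase bookkeeping $e^{i\theta}e^{i\omega_m}=i(-1)^m$, and the assembly $\frac{iK}{4\sqrt\xi}(z-\bar z)=-\frac{K}{2\sqrt\xi}\Im z$ all check out.

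One small point worth flagging: the error you control explicitly (the quartic defect in $\cos(2\pi u/K)$, integrated against $|\widehat h|$) is genuinely $\ll\xi K^{-4}\int|\widehat h(t)t^4|\,dt$, but the other two discarded pieces --- the terms $m\notin\{0,1\}$ and the extension of the truncated $u$-integral to all of $\RR$ --- come from the tail $\int_{|u|\gg K}|\widehat h(u)|\,du$ and carry no factor of $\xi$; they are $\ll K^{-4}\int|\widehat h(t)t^4|\,dt$ but not obviously $\ll\xi K^{-4}\int|\widehat h(t)t^4|\,dt$ when $\xi$ is small. In the paper's application $\xi=4\pi\sqrt{m_1m_2}/c\gg1$, so this is harmless, and for $\xi\ll1$ the lemma is anyway trivial because both the left side and the main term are astronomically small (by \eqref{Besselsmall} and the rapid decay of $\tilde h$). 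Still, ``are absorbed'' glosses over this case split; a one-line remark distinguishing $\xi\geq1$ from $\xi<1$ would close it.
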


\begin{remark} For odd weights one has the similar formula
\begin{displaymath}
\begin{split}
\sum_{k \equiv 1 \, (2)} i^{-k}h\left(\frac{k-1}{K}\right) J_{k-1}(\xi) = -\frac{i K}{2\sqrt{\xi}} \Re\left(e\left(\frac{\xi}{2\pi} -\frac{1}{8}\right)\tilde{h}\left(\frac{K^2}{2\xi}\right)\right) + O\left(\frac{\xi}{K^4} \int_{-\infty}^{\infty} |\widehat{h}(t) t^4| \,dt \right).
\end{split}
\end{displaymath}
\end{remark}

\begin{remark} It is well-known that an individual Bessel function $J_{k-1}(x)$ is hard to
control in the range $k \ll x \ll k^2$. The lemma shows, however, that on average over
$k$ its values cancel almost completely until $x \approx k^2$, when the asymptotic
behavior becomes stable. For further discussion of the error term see
\cite[p.\ 87]{Iw}.
\end{remark}

\begin{proof} This is based on \cite[Lemma~5.8]{Iw} and can be found in  \cite[Lemma~2.3]{Kh}.
\end{proof}

Finally we state a result which is useful for estimating the Hankel-type transform occurring in the Voronoi formula \eqref{vor}.

\begin{lemma}\label{lemma4} Let $F:(0,\infty)\to\CC$ be a smooth function of compact support. For $s\in\CC$ let $B_s$ denote either of the Bessel functions $J_s$, $Y_s$ or $K_s$. Then for $\alpha>0$ and $j\in\NN$ we have
\[\int_0^\infty F(x)B_s(\alpha\sqrt{x})\,dx = \pm\left(\frac{2}{\alpha}\right)^j
\int_0^\infty\frac{\partial^j}{\partial x^j}\bigl(F(x)x^{-\frac{s}{2}}\bigr)x^{\frac{s+j}{2}}B_{s+j}(\alpha\sqrt{x})\,dx.\]
\end{lemma}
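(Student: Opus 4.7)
The plan is to prove Lemma~\ref{lemma4} by iterated integration by parts, built on the standard Bessel recurrences $\frac{d}{du}[u^\nu J_\nu(u)] = u^\nu J_{\nu-1}(u)$, the same identity with $Y_\nu$ in place of $J_\nu$, and $\frac{d}{du}[u^\nu K_\nu(u)] = -u^\nu K_{\nu-1}(u)$ for $K_\nu$. After the change of variable $u = \alpha\sqrt{x}$ and the chain rule, these relations can be repackaged, for each $B \in \{J, Y, K\}$, as
\[\frac{d}{dx}\bigl[x^{(\nu+1)/2} B_{\nu+1}(\alpha\sqrt{x})\bigr] = \sigma\,\frac{\alpha}{2}\, x^{\nu/2} B_\nu(\alpha\sqrt{x}),\]
where $\sigma = \sigma(B) \in \{\pm 1\}$ is $+1$ for $J, Y$ and $-1$ for $K$. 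In particular, $x^{\nu/2} B_\nu(\alpha\sqrt{x})$ is, up to the explicit constant, a total $x$-derivative.

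Next I would split the integrand in the form $F(x) B_s(\alpha\sqrt{x}) = [F(x) x^{-s/2}]\cdot[x^{s/2} B_s(\alpha\sqrt{x})]$ and integrate by parts using the antiderivative above with $\nu = s$. Since $F$ has compact support inside $(0,\infty)$, the boundary terms vanish, and one obtains the case $j = 1$ of the lemma with overall sign $-\sigma$ and a factor $2/\alpha$.

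To finish I would iterate. After $i$ steps the integrand takes the shape $\frac{d^i}{dx^i}[F(x) x^{-s/2}] \cdot x^{(s+i)/2} B_{s+i}(\alpha\sqrt{x})$, and the identity above with $\nu = s+i$ again exhibits the Bessel factor $x^{(s+i)/2} B_{s+i}(\alpha\sqrt{x})$ as a total derivative, so the next integration by parts goes through unchanged; the compact support of $F$ guarantees that every boundary contribution vanishes, because $x^{-s/2}$ and the Bessel factor are smooth on the support of $F$ and its derivatives. After $j$ iterations one reaches precisely the formula claimed in Lemma~\ref{lemma4}, with overall sign $(-\sigma)^j \in \{\pm 1\}$; this is the $\pm$ in the statement.

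There is no real obstacle here—the argument is elementary calculus once the correct recurrence for $x^{\nu/2} B_\nu(\alpha\sqrt{x})$ is in hand. The only point requiring care is the bookkeeping of the sign, which depends both on which Bessel function is used and on the parity of $j$; but the lemma only asserts the existence of \emph{some} sign, so this is harmless.
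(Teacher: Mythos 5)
Your proposal is correct and takes essentially the same approach as the paper: both hinge on the recurrence $\frac{d}{du}\bigl[u^{\nu}B_{\nu}(u)\bigr]=\pm u^{\nu}B_{\nu-1}(u)$, recast via $u=\alpha\sqrt{x}$ into a statement that $x^{\nu/2}B_{\nu}(\alpha\sqrt{x})$ is an explicit total $x$-derivative, followed by $j$ integrations by parts with no boundary terms thanks to the compact support of $F$. The paper chooses to iterate the derivative identity $j$ times first and then integrate by parts $j$ times in one sweep, whereas you interleave the two, but these are cosmetically different presentations of the same calculation.
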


\begin{proof} The Bessel functions $B_s$ satisfy the recurrence relation $\bigl(x^s B_s(x)\bigr)'=\pm x^s B_{s-1}$
which translates to
\[(\alpha\sqrt{x})^sB_s(\alpha\sqrt{x})=\pm\frac{2}{\alpha^2}\frac{\partial}{\partial x}\bigl((\alpha\sqrt{x})^{s+1}B_{s+1}(\alpha\sqrt{x})\bigr).\]
Using this identity and applying integration by parts $j$ times we obtain
\begin{align*}
\int_0^\infty F(x)B_s(\alpha\sqrt{x})\,dx
&=\pm\left(\frac{2}{\alpha^2}\right)^j\int_0^\infty F(x)(\alpha\sqrt{x})^{-s}\frac{\partial^j}{\partial x^j}\bigl((\alpha\sqrt{x})^{s+j}B_{s+j}(\alpha\sqrt{x})\bigr)\,dx\\
&=\pm\left(\frac{2}{\alpha^2}\right)^j\int_0^\infty \frac{\partial ^j}{\partial x^j}\bigl(F(x)(\alpha\sqrt{x})^{-s}\bigr)(\alpha\sqrt{x})^{s+j}B_{s+j}(\alpha\sqrt{x})\,dx\\
&=\pm\left(\frac{2}{\alpha}\right)^j
\int_0^\infty\frac{\partial^j}{\partial x^j}\bigl(F(x)x^{-\frac{s}{2}}\bigr)x^{\frac{s+j}{2}}B_{s+j}(\alpha\sqrt{x})\,dx.
\end{align*}
\end{proof}

\subsection{Fourier coefficients of cusp forms}

For a Dirichlet character $\chi$ modulo $N$ and a positive integer $c$ divisible by $N$ let
\begin{displaymath}
  S_{\chi}(m, n, c) := \asterisk\sum_{d \,(\mod c)}\chi(d)\, e\left(\frac{m\bar{d} + nd}{c}\right)
\end{displaymath}
be the twisted Kloosterman sum. We formulate Petersson's formula for the basis of holomorphic cusp forms specified in \eqref{holobasis}.

\begin{lemma}\label{lemma2} For $k, m, n \in \NN$ and $k \geq 3$ we have
\begin{displaymath}
  \sum_{j=1}^{\theta_{k}(N, \chi)}\rho(f_{j,k})\lambda_{j, k}(m)\ov{\lambda_{j, k}(n)} = \delta_{m, n} + 2\pi i^{-k}\sum_{N \mid c} \frac{S_{\chi}(m, n, c)}{c} J_{k-1}\left(\frac{4\pi\sqrt{mn}}{c}\right).
\end{displaymath}
 \end{lemma}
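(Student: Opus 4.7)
The plan is to derive Petersson's formula in the standard way, via Poincaré series, since Lemma~\ref{lemma2} is a classical statement (essentially \cite[Thm.~3.6]{Iw} specialized to our basis and normalization). Throughout I write $\psi_f(m)$ for the $m$-th Fourier coefficient of $f$, so that $\psi_{f_{j,k}}(m)=\lambda_{j,k}(m)m^{(k-1)/2}$ by \eqref{holobasis}.

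First, for an integer $n\geq 1$ I would introduce the $n$-th Poincaré series
\[
P_n(z):=\sum_{\gamma\in\Gamma_\infty\backslash\Gamma_0(N)}\overline{\chi(d_\gamma)}\,(c_\gamma z+d_\gamma)^{-k}e(n\gamma z),\qquad \gamma=\begin{pmatrix}a_\gamma&b_\gamma\\c_\gamma&d_\gamma\end{pmatrix}.
\]
The factor $\bar\chi(d_\gamma)$ (rather than $\chi(d_\gamma)$) is chosen precisely so that $P_n\in S_k(N,\chi)$; this uses $c_\gamma\equiv 0\pmod N$ to show the cocycle condition for $\chi$. The hypothesis $k\geq 3$ gives absolute convergence, hence $P_n$ is a well-defined cusp form. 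Two standard facts about $P_n$ drive the proof. On the one hand, unfolding against any $f\in S_k(N,\chi)$ collapses the sum to the coset of the identity and gives
\[
\langle P_n,f\rangle=\frac{\Gamma(k-1)}{(4\pi n)^{k-1}}\,\overline{\psi_f(n)}.
\]
On the other hand, splitting the $\gamma$-sum by lower row, Fourier expanding the contribution of each $c>0$ via $e(n\gamma z)=e(n a_\gamma/c_\gamma)\,e(-n/(c_\gamma(c_\gamma z+d_\gamma)))$, and using the standard Bessel integral yields
\[
P_n(z)=\sum_{m=1}^\infty\Biggl(\delta_{m,n}+2\pi i^{-k}\Bigl(\frac m n\Bigr)^{(k-1)/2}\sum_{N\mid c}\frac{S_{\bar\chi}(n,m,c)}{c}J_{k-1}\!\left(\frac{4\pi\sqrt{mn}}{c}\right)\!\Biggr)e(mz).
\]
A change of variable $d\mapsto\bar d$ in the definition of the twisted Kloosterman sum, together with the evenness of $\chi$, gives $S_{\bar\chi}(n,m,c)=S_\chi(m,n,c)$, which is the form of the sum appearing in the lemma.

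Now since $\{f_{j,k}\}$ is an orthogonal basis of $S_k(N,\chi)$,
\[
P_n=\sum_{j}\frac{\langle P_n,f_{j,k}\rangle}{\|f_{j,k}\|^2}\,f_{j,k}.
\]
Reading off the $m$-th Fourier coefficient and inserting the unfolding identity above, together with $\psi_{f_{j,k}}(m)=\lambda_{j,k}(m)m^{(k-1)/2}$ and the definition $\rho(f)=\Gamma(k-1)/((4\pi)^{k-1}\|f\|^2)$, one obtains
\[
\sum_{j=1}^{\theta_k(N,\chi)}\rho(f_{j,k})\,\lambda_{j,k}(m)\overline{\lambda_{j,k}(n)}\,\Bigl(\frac m n\Bigr)^{(k-1)/2}
=\delta_{m,n}+2\pi i^{-k}\Bigl(\frac m n\Bigr)^{(k-1)/2}\!\!\sum_{N\mid c}\frac{S_\chi(m,n,c)}{c}J_{k-1}\!\left(\frac{4\pi\sqrt{mn}}{c}\right).
\]
The factor $(m/n)^{(k-1)/2}$ equals $1$ on the support of $\delta_{m,n}$, so it cancels cleanly, yielding the lemma.

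There is no serious obstacle. The only things to watch are bookkeeping items: (i) putting $\bar\chi$ rather than $\chi$ in the Poincaré series so that $P_n$ lands in $S_k(N,\chi)$; (ii) tracking the factor $(m/n)^{(k-1)/2}$ that distinguishes the arithmetically normalized $\lambda_{j,k}$ from the raw Fourier coefficients $\psi_{f_{j,k}}$; and (iii) applying the $d\mapsto\bar d$ involution plus $\chi(-1)=1$ to convert $S_{\bar\chi}(n,m,c)$ into $S_\chi(m,n,c)$. The assumption $k\geq 3$ is used solely to guarantee absolute convergence of the Poincaré series and the legality of the interchange of sum and integral in the Fourier expansion.
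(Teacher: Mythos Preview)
Your proof is correct and is precisely the standard Poincaré-series derivation of Petersson's formula; the paper itself does not reprove this but simply cites \cite[Proposition~14.5]{IK}, whose proof proceeds along the same lines you outline. One very minor remark: in step~(iii) the substitution $d\mapsto\bar d$ alone already gives $S_{\bar\chi}(n,m,c)=S_\chi(m,n,c)$, since $\overline{\chi(\bar d)}=\chi(d)$; the evenness of $\chi$ is not needed at that point.
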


\begin{proof}
This follows from \cite[Proposition~14.5]{IK}.
\end{proof}

We shall often use the following standard result.

\begin{lemma}
We have the uniform bound
 \begin{equation}\label{boundfourier}
   \sum_{m \leq M}|\lambda_g(m) |^2 \ll_{g, \eps}   M^{1+\eps}.
\end{equation}
\end{lemma}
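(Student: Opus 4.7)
The plan is to handle the cuspidal and Eisenstein cases separately, using Rankin--Selberg theory for the former and a trivial divisor bound for the latter. In both situations the bound is well known and the argument is short.

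First, suppose $g$ is a cusp form (holomorphic or Maa{\ss}). Consider the Dirichlet series
\[D_g(s) := \sum_{n=1}^{\infty} \frac{|\lambda_g(n)|^2}{n^s},\]
absolutely convergent for $\Re s > 1$. The Rankin--Selberg convolution $L(s, g \otimes \tilde g)$ equals $\zeta(2s) D_g(s)$ up to a finite number of local factors, and is known to extend meromorphically to $\CC$ with a single simple pole at $s = 1$ whose residue is a positive multiple of $\|g\|^2$, no further singularities on $\Re s = 1$, and polynomial growth on vertical strips. By Landau's Tauberian theorem (or, if one wants a power saving, a Perron-type contour integration using a standard zero-free region for Rankin--Selberg $L$-functions) one deduces $\sum_{n \leq M} |\lambda_g(n)|^2 \sim c_g M$, which is stronger than what is claimed.

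Second, suppose $g = E_r$ or $g = E_0$, so that $\lambda_g(n) = \sum_{ab = n} (a/b)^{ir}$ for some fixed real $r$. Since $|(a/b)^{ir}| = 1$, the triangle inequality immediately yields $|\lambda_g(n)| \leq d(n)$, and the classical divisor moment
\[\sum_{n \leq M} d(n)^2 \ll M(\log M)^3 \ll_\eps M^{1+\eps}\]
finishes this case.

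There is no genuine obstacle; the only point worth verifying is that in the Maa{\ss} case, where Deligne-type pointwise bounds on $\lambda_g(n)$ are not available, one really does have the required analytic properties of $L(s, g \otimes \tilde g)$ at $s=1$. This is classical, going back to Rankin and Selberg, and gives a mean square of size $\asymp M$ on average even in the absence of a pointwise Ramanujan bound, which is more than enough for the $M^{1+\eps}$ estimate asserted in the lemma.
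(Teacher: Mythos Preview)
Your argument is correct and is essentially the standard proof behind the references the paper cites: the paper simply points to \cite[(1.80)~\&~(14.56)]{IK}, where (14.56) is precisely the Rankin--Selberg mean-square bound for cusp forms and (1.80) the divisor moment used in the Eisenstein case. One small remark: for the bare $M^{1+\eps}$ bound you do not even need a Tauberian theorem---once you know $D_g(s)$ converges absolutely for $\Re s>1$ (which Rankin--Selberg gives), positivity of the coefficients yields $\sum_{n\leq M}|\lambda_g(n)|^2\leq M^{1+\eps}D_g(1+\eps)\ll_{g,\eps}M^{1+\eps}$ directly.
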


\begin{proof} See \cite[(1.80)~\&~(14.56)]{IK} for stronger bounds.
\end{proof}

\section{The diagonal term}

We shall assume \eqref{assumption} for the rest of the paper, since otherwise the asymptotic formula is trivial.
We substitute the approximate functional equation \eqref{approxfuncteq2} in the special case \eqref{para1}--\eqref{para3} into  \eqref{defI} and choose $M:=3$.
We note that both $G = G_{t, k}$ and $C = C_{t, k}$ depend (mildly) on $t$ and $k$, but in the support of $W_1$ and $W_2$ we have $C_{t,k}\asymp\tilde C:=(T+K)^8$. By \eqref{dim}--\eqref{defrho} (or directly by Lemma~\ref{lemma2}) the error term contributes at most
\begin{equation}\label{error1}
 \ll  TK \tilde C^{1/4+\eps} (T+K)^{-M} \ll (TK)^{1+\eps}(T+K)^{-1}.
\end{equation}
After inserting the main term, the $j$-sum in \eqref{defI} equals
\begin{equation}\label{jsum}
2 \sum_{n  } G\left(\frac{n}{\sqrt{C}}\right)\frac{1}{\sqrt{n}} \sum_{d_1^2d_2^2m_1m_2 = n} \frac{\chi(d_1)}{d_1^{2it}} \frac{\ov{\chi(d_2)}}{d_2^{-2it}} \frac{\lambda_g(m_1)}{m_1^{it}} \frac{\ov{\lambda_g(m_2)}}{m_2^{-it}} \sum_{j= 1}^{\theta_k(N, \chi)} \rho(f_{j,k}) \lambda_{j, k}(m_1) \ov{\lambda_{j, k}(m_2)}.
\end{equation}
By Lemma~\ref{lemma2}, the innermost sum equals
\begin{equation}\label{applicationPetersson}
  \delta_{m_1, m_2} + 2 \pi i^{-k} \sum_{N \mid c} \frac{S_{\chi}(m_1, m_2, c)}{c} J_{k-1} \left(\frac{4 \pi\sqrt{ m_1m_2}}{c}\right).
\end{equation}
The $\delta$-term contributes
\begin{displaymath}
  2\int_{0}^{\infty}W_1\left(\frac{t}{T}\right)\sum_{k \equiv 0 \, (2)} W_2\left(\frac{k-1}{K}\right)
 \sum_{d_1, d_2, m} \frac{\chi(d_1)}{d_1^{1+2it}} \frac{\ov{\chi(d_2)}}{d_2^{1-2it}} \frac{|\lambda_g(m)|^2}{m}\,G\left(\frac{d_1^2d_2^2m^2}{\sqrt{C}}\right)\,dt.
\end{displaymath}
Let $\widecheck{G}$ denote the Mellin transform of $G$ and similarly for other functions.  By \cite[p.~145]{Li} we have
\begin{displaymath}
  \sum_{m=1}^\infty  \frac{|\lambda_g(m)|^2}{m^s} = \frac{L(s, g \otimes \tilde g) } {\zeta (2s)}
  \end{displaymath}
when $g$ is cuspidal. If $g = E_{r}$ is an Eisenstein series, we take the preceding display as the definition for $L(s, g\otimes \tilde g)$, so that $L(s, E_{r} \otimes \widetilde{E_{r}}) = \zeta(s)^2 \zeta(s + 2ir)\zeta(s - 2ir)$. With this notation the contribution of the $\delta$-term in Petersson's formula equals
\begin{equation}\label{afterPet}
\begin{split}
 &2\int_{0}^{\infty}W_1\left(\frac{t}{T}\right)\sum_{k \equiv 0 \, (2)} W_2\left(\frac{k-1}{K}\right)\\
 &\times \frac{1}{2\pi i}\int_{(1)} L(1+2it+2s, \chi ) L(1-2it+2s, \ov{\chi})
 \frac{L(1+2s, g \otimes \tilde g) }{\zeta(2+4s)} \,C^{s/2} \widecheck{G}(s) \,ds \, dt.
    \end{split}
\end{equation}
We shift the contour to the line $\Re s = -1/4 + \eps$, then the new integral contributes (cf.\ \eqref{boundsG})
\begin{equation}\label{error2}
  \ll (TK)^{1+\eps} (T+K)^{-1}.
\end{equation}

There is pole at $s=0$ whose order $v$ depends on $g$: if $g$ is a cusp form then $v=2$, if $g = E_{r}$ with $r \neq 0$ then $v=3$, while for $g=E_0$ we have $v=5$. The residue of the pole is given by a linear combination (with coefficients depending at most on $N$ and $g$) of
\begin{equation}\label{error2a}
\begin{split}
  \int_{0}^{\infty} W_1\left(\frac{t}{T}\right)&  \sum_{k \equiv 0 \, (2)} W_2\left(\frac{k-1}{K}\right) L^{(j_1)} (1+ 2it, \chi) L^{(j_2)}(1-2it, \ov{\chi}) \log^{j_3}   \mathcal{C}(t, k) \,dt \\
  & + O\left((TK)^{1+\eps}(T+K)^{-1}\right)
  \end{split}
\end{equation}
for $j_1 + j_2 + j_3 \leq v-1$.  The error term comes from approximating the square root of  $C=C_{t,k}$ in \eqref{para3} by \eqref{simplecond} and inserting \eqref{laurent}. The constant in front of the leading term $(j_1, j_2, j_3) = (0, 0, v-1)$ equals (cf.\ \eqref{laurent})
\begin{equation}\label{coefficient}
\frac{2}{(v-1)!\zeta(2)}\lim_{s\to 0}\bigl(s^{v-1}L(1+2s, g \otimes \tilde g)\bigr)=
  \begin{cases}
    \frac{L(1, \text{Ad}^2g)}{\zeta(2)}, & g \text{ cuspidal,}\\
    \frac{|\zeta(1+2ir)|^2}{4\zeta(2)}, & g = E_r, \quad r \neq 0,\\
    \frac{1}{192\zeta(2)}, & g = E_0.
  \end{cases}
\end{equation}
For $\chi$ trivial (i.e.\ $N$=1) there are also poles at $s=\pm it$, but their contribution is (cf.\ \eqref{boundsG})
\begin{equation}\label{error5}
  \ll (TK)^{1+\eps}T^{-A}.
\end{equation}
If $g=E_r$ with $r \neq  0$, there are two additional simple poles in \eqref{afterPet} at $\pm ir$ with residue
\begin{equation}\label{extrares}
\begin{split}
 & 2\int_0^{\infty} W_1\left(\frac{t}{T}\right) \sum_{k\equiv 0 \, (2)} W_2\left(\frac{k-1}{K}\right)  \\
 & \times \frac{L(1 + 2it \pm 2 ir, \chi) L(1 - 2 i t \pm 2 i r, \bar{\chi})\zeta(1 \pm 2 ir)^2 \zeta(1\pm 4 ir)}{2 \zeta(2 \pm 4 ir)} C^{\pm ir/2} \widecheck{G}(\pm ir) \,dt.
  \end{split}
\end{equation}
Here again we can approximate $C^{1/2}$ by \eqref{simplecond} and $\widecheck{G}(\pm ir)$ by $\widecheck{G}_0(\pm ir)$ at the cost of an error term
$O\left((TK)^{1+\eps}(T+K)^{-1}\right)$, cf.\ \eqref{para3} and \eqref{MellinGdef}.

We continue with the analysis of \eqref{error2a}. Applying Poisson summation it is straightforward to see that the main term in \eqref{error2a} equals,
for any $A> 0$,
\begin{equation}\label{firststep}
\begin{split}
  \frac{1}{2} \int_{0}^{\infty} &\int_{0}^{\infty} W_1\left(\frac{t}{T}\right) W_2\left(\frac{x}{K}\right)  L^{(j_1)} (1+ 2it, \chi) L^{(j_2)}(1-2it, \ov{\chi}) \log^{j_3}\mathcal{C}(t, x) \,dt\, dx  \\
 & + O\left((TK )^{1+\eps} K^{-A}\right).
    \end{split}
\end{equation}
Now we use the simple approximation
\begin{displaymath}
  L^{(j)}(1+ 2it, \chi) =  \sum_{m\leq  M} \frac{\chi(m)(-\log m)^j}{m^{1+2it}} + O_{j}\left(\frac{(\log M)^j}{M}\right), \qquad M \geq \frac{e^{3j/2}}{3}N|t|,
\end{displaymath}
which follows in a standard fashion from van der Corput's lemma (see Lemma~4.10 and Theorem~4.11 in \cite{Ti}). We substitute this into \eqref{firststep} with $M := e^6NT$ (note that $j_1,j_2\leq 4$ and $|t|\leq 2T$). Then the $t$-integral equals, uniformly for $x\asymp K$,
\begin{equation*}
\sum_{m, n \leq e^6 NT} \frac{\chi(m) \ov{\chi(n)} (-\log m)^{j_1} (-\log n)^{j_2}}{mn}
   \int_{0}^{\infty} W_1\left(\frac{t}{T}\right)  \log^{j_3}\mathcal{C}(t, x) \left(\frac{n}{m}\right)^{2it} dt + O\bigl((TK)^{\eps}\bigr).
\end{equation*}
The diagonal and error terms together contribute (cf.\ \eqref{smoothlog})
\begin{equation}\label{diagonalterm}
 \frac{\zeta_{(N)}^{(j_1+j_2)}(2)}{2}TK\mathcal{L}_{j_3}(T,K)+O\bigl(K(TK)^{\eps}\bigr),
\end{equation}
where $(N)$ indicates the removal of the Euler factors at primes dividing $N$. Let us now estimate the off-diagonal term. Partial integration shows, for any $m\neq n$ and $x\asymp K$,
\begin{displaymath}
   \int_{0}^{\infty} W_1\left(\frac{t}{T}\right)  \log^{j_3}\mathcal{C}(t, x)\left(\frac{n}{m}\right)^{2it} dt \ll \frac{(TK)^{\eps}}{|\log(n/m)|},
\end{displaymath}
hence the off-diagonal term contributes at most
\begin{align}
\nonumber&\ll K(TK)^\eps\sum_{\substack{m,n\leq e^6 NT\\m\neq n}}\frac{1}{mn|\log(n/m)|}\\
\nonumber&\ll K(TK)^\eps\sum_{m<n\leq e^6 NT}\frac{1}{n\min(m,n-m)}\\
\label{error4}&\ll K(TK)^\eps\sum_{m',n'\leq e^6 NT}\frac{1}{m'n'}\ll K(TK)^\eps.
\end{align}

The same argument shows that \eqref{extrares} equals, up to an error term already present in \eqref{error2a},
\begin{equation}\label{extrares1}
   TK  \mathcal{M}_{\pm ir}(T,K) \frac{\zeta_{(N)}(2 \pm 4 ir)\zeta(1 \pm 2 ir)^2 \zeta(1\pm 4 ir)}{2 \zeta(2 \pm 4 ir)}\widecheck{G}_0(\pm ir).
\end{equation}

\begin{remark} We will see in Section~\ref{secpolar} that the term \eqref{extrares1} will be cancelled by some portion of the off-diagonal term. We note that we could have avoided the computation of \eqref{extrares} and \eqref{extrares1} in the case $g = E_r$ with $r \neq  0$ by choosing  $G_0$ as in \eqref{G0def}--\eqref{Hfeature} with the additional feature that $H(s)$ is divisible by $s^2+r^2$.  Then
$\widecheck G_0(\pm ir)=0$, hence by \eqref{MellinGdef} also $\widecheck G(\pm ir)=0$,
and the residues vanish. This trick was probably used for the first time in \cite{BHM}.
\end{remark}

The various error terms in \eqref{error1}, \eqref{error2}--\eqref{error2a}, \eqref{error5}, \eqref{firststep}--\eqref{error4} are admissible in Theorems~\ref{theorem1}--\ref{theorem3}. The main term in \eqref{diagonalterm} furnishes the main term of Theorem~\ref{theorem1} and part of the main terms in Theorems~\ref{theorem2}--\ref{theorem3}. We note that Section~\ref{secpolar} discusses
an additional contribution to the main terms of Theorems~\ref{theorem2}--\ref{theorem3}.
The leading constant in the various cases of $g$ follows from \eqref{coefficient} and  \eqref{diagonalterm}.

\section{The off-diagonal term}

\subsection{Averaging over $k$ and $t$} We return to \eqref{jsum} which is the $j$-sum in \eqref{defI}, and substitute now the Kloosterman term in \eqref{applicationPetersson} for the  innermost sum. This gives a total contribution of
\begin{equation}\label{totaloff}
\begin{split}
&4 \pi \int_{0}^{\infty} W_1\left(\frac{t}{T}\right)\sum_{k \equiv 0 \, (2)} W_2\left(\frac{k-1}{K}\right)  i^{-k}   \sum_{n  }
G\left(\frac{n}{\sqrt{C}}\right)\frac{1}{\sqrt{n}} \\
&\times\sum_{d_1^2d_2^2m_1m_2 = n} \frac{\chi(d_1)}{d_1^{2it}} \frac{\ov{\chi(d_2)}}{d_2^{-2it}} \frac{\lambda_g(m_1)}{m_1^{it}} \frac{\ov{\lambda_g(m_2)}}{m_2^{-it}}
\sum_{N \mid c} \frac{S_{\chi}(m_1, m_2, c)}{c} J_{k-1} \left(\frac{4 \pi\sqrt{ m_1m_2}}{c}\right) dt.
\end{split}
\end{equation}
 Recall that both $G = G_{t, k}$ and $C = C_{t, k}$,  defined in \eqref{Gdef} and \eqref{para3}, depend (mildly) on $t$ and $k$, but we have $C_{t,k}\asymp\tilde C:=(T+K)^8$. Moreover, $G$ is real-valued as we have assumed
according to Remark~\ref{selfdual}. First we observe that \eqref{totaloff} is absolutely convergent, due to the rapid decay of the Bessel $J$-function near 0 for large $k$, see \eqref{Besselsmall}.  More precisely, we can truncate the multiple sum at
\begin{displaymath}
  n \leq \tilde C^{1/2+\eps}, \qquad c \leq \sqrt{m_1m_2}\tilde C^{\eps}
\end{displaymath}
at the cost of a negligible error.

Next we write
\begin{displaymath}
  W_1\left(\frac{t}{T}\right)  W_2\left(\frac{k-1}{K}\right) G_{t, k}\left(\frac{n}{\sqrt{C_{t, k}}}\right) =: \Omega_1\left(\frac{t}{T}, \frac{k-1}{K}, \frac{n}{\sqrt{\tilde{C}}}\right)
\end{displaymath}
where $\Omega_1 : (0, \infty)^3 \rightarrow \RR$ is smooth function depending on $K, T, N$ and $g$ with the following properties: it is compactly supported in first two variables and rapidly decaying in the third variable, uniformly in $K$ and $T$. Let
\begin{equation}\label{defom1}
  \tilde{\Omega}_1(x, y, z) := W_1(x)W_2(y) G_0\left(\frac{(2\pi)^4z\sqrt{\tilde{C}}}{N^2\left((xT)^2+(\frac{1}{2}yK)^2\right)^2}\right).
\end{equation}
Then by \eqref{fateta}, \eqref{Gdef}, \eqref{para3} the partial derivatives satisfy, uniformly in
$\bm x\in(0,\infty)^3$,
\[{\bm x}^{\bm k}\left(\Omega_1^{(\bm j)}({\bm x})-\tilde\Omega_1^{(\bm j)}({\bm x})\right)\ll_{{\bm j},{\bm k}} (T+K)^{-1},\qquad {\bm j},{\bm k}\in\NN_0^3.\]

In order to sum over $k$ with the help of Lemma~\ref{lem2}, we define
\begin{displaymath}
  \Omega_2(x, u, z) := \int_0^{\infty} \Omega_1(x, y, z) e^{i y^2 u} \,dy.
\end{displaymath}
Then $\Omega_2: (0,\infty)\times\RR\times(0,\infty)\to\CC$ is compactly supported in the first variable.   Integration by parts shows that it is rapidly decaying in the other two variables, and moreover for
\begin{equation}\label{defom2}
  \tilde{\Omega}_2(x, u, z) := \int_0^{\infty} \tilde{\Omega}_1(x, y, z) e^{i y^2 u} \,dy
\end{equation}
we have, uniformly in $\bm x\in (0,\infty)\times\RR\times(0,\infty)$,
\[{\bm x}^{\bm k}\left(\Omega_2^{(\bm j)}({\bm x})-\tilde\Omega_2^{(\bm j)}({\bm x})\right)\ll_{{\bm j},{\bm k}} (T+K)^{-1},\qquad {\bm j},{\bm k}\in\NN_0^3.\]
Applying now Lemma~\ref{lem2} with $\xi := 4\pi \sqrt{m_1m_2}/c$ to \eqref{totaloff}, we obtain the main term   \begin{displaymath}
 \begin{split}
& K   \sum_{\pm} \frac{1\pm i}{2} \int_{0}^{\infty}   \sum_{d_1, d_2, m_1, m_2}   \sum_{N \mid c}   \Omega_2\left(\frac{t}{T}, \frac{\pm K^2c}{8\pi \sqrt{m_1m_2}}, \frac{d_1^2d_2^2m_1m_2}{\sqrt{\tilde{C}}}\right)\\
& \quad \times \frac{\chi(d_1)}{d_1^{1+2it}} \frac{\ov{\chi(d_2)}}{d_2^{1-2it}} \frac{\lambda_g(m_1)}{m_1^{3/4+it}} \frac{\ov{\lambda_g(m_2)}}{m_2^{3/4-it}} e\left(\pm \frac{2 \sqrt{m_1m_2}}{c}\right)
\frac{S_{\chi}(m_1, m_2, c)}{c^{1/2}}   \, dt,
\end{split}
\end{displaymath}
while the  error term in Lemma~\ref{lem2} infers a total error of
\begin{equation}\label{offerror1}
  \begin{split}
&\ll  \tilde C^{\eps}\frac{T}{K^4} \sum_{m_1m_2 \leq \tilde C^{1/2+\eps}}   |\lambda_g(m_1) \lambda_g(m_2) | \sum_{\substack{ c \leq \tilde C^{1/4+\eps}\\ N \mid c}} \frac{|S_{\chi}(m_1, m_2, c)|}{c^2}\\
&\ll  \tilde C^{\eps}\frac{T}{K^4}\sum_{mn \leq \tilde C^{1/2+\eps}}|\lambda_g(m)|^2\ll
(TK)^{\eps}\frac{T(T^4+K^4)}{K^4}\ll (TK)^{1+\eps} \left(\frac{T^4}{K^5}+\frac{K^3}{T^4}\right).
\end{split}
\end{equation}
 Here it is enough to use the trivial bound $|S_{\chi}(m_1, m_2, c)| \leq c$ and \eqref{boundfourier}.

In order to integrate over $t$, we define
\begin{displaymath}
  \Omega_3(w, u, z) := \int_0^{\infty} \Omega_2(x, u, z) e^{i wx} \,dx.
\end{displaymath}
Then $\Omega_3: \RR^2\times (0, \infty)\to\CC$ is rapidly decaying in all three variables.  Putting
\begin{equation}\label{defom3}
  \tilde{\Omega}_3(w, u, z) := \int_0^{\infty} \tilde{\Omega}_2(x, u, z) e^{i wx} \,dx,
 \end{equation}
we see as above, that uniformly in $\bm x\in\RR^2\times (0, \infty)$,
\begin{equation}\label{tilde}
{\bm x}^{\bm k}\left(\Omega_3^{(\bm j)}({\bm x})-\tilde\Omega_3^{(\bm j)}({\bm x})\right)\ll_{{\bm j},{\bm k}} (T+K)^{-1},\qquad {\bm j},{\bm k}\in\NN_0^3.
\end{equation}
In addition, we factor out $\delta=(d_1, d_2)$ to arrive at
\begin{displaymath}
\begin{split}
  TK  &  \sum_{\pm} \frac{1\pm i}{2} \sum_{\substack{(\delta, N) = 1\\(d_1, d_2) = 1}} \sum_{m_1, m_2}   \sum_{N \mid c}   \Omega_3\left(T \log \frac{d_2^2m_2}{d_1^2m_1}, \frac{\pm K^2c}{8\pi \sqrt{m_1m_2}}, \frac{\delta^4d_1^2d_2^2m_1m_2}{\sqrt{\tilde{C}}}\right)\\
& \times \frac{\chi(d_1)\ov{\chi(d_2)}}{\delta^2 d_1d_2} \frac{\lambda_g(m_1)}{m_1^{3/4}} \frac{\ov{\lambda_g(m_2)}}{m_2^{3/4}} e\left(\pm \frac{2 \sqrt{m_1m_2}}{c}\right)
\frac{S_{\chi}(m_1, m_2, c)}{c^{1/2}}.
\end{split}
\end{displaymath}
It is convenient to introduce dyadic decompositions. Let $\omega:(0,\infty)\to[0,\infty)$ be a smooth function supported on $[1/2,2]$ such that
\begin{equation}\label{dyadic}
  \sum_{j=0}^{\infty} \omega(x/2^j) = 1, \qquad x\geq 1 .
\end{equation}
Then we can recast the preceding expression as
\begin{equation}\label{transformed}
\begin{split}
TK &  \sum_{\pm} \frac{1\pm i}{2} \sum_{\substack{M_1, M_2, G\geq 1\\ \text{powers of two}}} \sum_{\substack{(\delta, N) = 1\\(d_1, d_2) = 1}} \sum_{m_1, m_2}   \sum_{N \mid c}   \Omega_3\left(T \log \frac{d_2^2m_2}{d_1^2m_1}, \frac{\pm K^2c}{8\pi \sqrt{m_1m_2}}, \frac{\delta^4d_1^2d_2^2m_1m_2}{\sqrt{\tilde{C}}}\right)\\
& \times \omega\left(\frac{m_1}{M_1}\right) \omega\left(\frac{m_2}{M_2}\right)\omega\left(\frac{c}{G}\right) \frac{\chi(d_1)\ov{\chi(d_2)}}{\delta^2 d_1d_2} \frac{\lambda_g(m_1)}{m_1^{3/4}} \frac{\ov{\lambda_g(m_2)}}{m_2^{3/4}} e\left(\pm \frac{2 \sqrt{m_1m_2}}{c}\right)
\frac{S_{\chi}(m_1, m_2, c)}{c^{1/2}}.
\end{split}
\end{equation}
By the rapid decay of $\Omega_3$ in all three variables we can restrict the summation to
\begin{equation}\label{sizes}
  M_1M_2 \leq  \frac{\tilde C^{1/2+\eps}}{\delta^4d_1^2d_2^2}, \qquad G \leq \frac{\tilde C^\eps(M_1M_2)^{1/2}}{K^2}.
\end{equation}
Moreover, we can assume
\begin{equation}\label{close}
  d_2^2m_2 = d_1^2m_1\left(1+O(T^{\eps-1})\right).
\end{equation}
We will keep this in mind for later and in particular often use
\begin{equation}\label{sizeM}
  d_2^2M_2 \asymp d_1^2M_1.
\end{equation}

\subsection{Interlude on character sums}

For later purposes we need to transform the expression
\begin{displaymath}
  S_{\chi}(m_1, m_2, c) e \left(\pm \frac{2\sqrt{m_1m_2}}{c} \right)
\end{displaymath}
in \eqref{transformed}. We proceed similarly as in \cite[Section~6]{Y}. We start by writing
\begin{displaymath}
  e \left(\pm \frac{2\sqrt{m_1m_2}}{c} \right) = e\left(\pm \frac{d_1^2m_1+d_2^2m_2}{d_1d_2c}\right) e\left(\mp \frac{(\sqrt{d_1^2m_1} - \sqrt{d_2^2m_2})^2}{d_1d_2c}\right).
\end{displaymath}
We infer
\begin{align}
 S_{\chi}(m_1, m_2, c)&e\left(\pm \frac{d_1^2m_1+d_2^2m_2}{d_1d_2c}\right) \nonumber\\
 = & \frac{\phi(c)}{\phi(d_1d_2c)} S_{\chi}(d_1d_2m_1, d_1d_2m_2, d_1d_2c)  e\left(\pm \frac{d_1^2m_1+d_2^2m_2}{d_1d_2c}\right)\nonumber\\
 = & \frac{\phi(c)}{\phi(d_1d_2c)}  \asterisk\sum_{d\, (d_1d_2c)} \chi(d) e\left(m_1\frac{d_2\bar{d} \pm d_1}{d_2c} + m_2 \frac{d_1d\pm d_2}{d_1c}\right)\label{char}.
\end{align}
We write
\begin{displaymath}
  (d_1d \pm d_2, c) = g, \qquad c = gh, \qquad d_1d \pm d_2 = gf.
\end{displaymath}
Note that $(gf,d_1d_2)=1$ by $(d_1,d_2)=1$. Let us fix a decomposition $c=gh$ with $(g,d_1d_2)=1$. It is straightforward to check, using again $(d_1,d_2)=1$, that as $d$ runs through $(\ZZ/d_1d_2c\ZZ)^{\times}$ with $(d_1d\pm d_2, c) = g$, then $f$ runs through $(\ZZ/d_1^2d_2h\ZZ)^{\times}$ with $(gf \mp d_2, d_1^2d_2h) = d_1$, and this is a bijection between the relevant residue classes. With this notation we have
\begin{displaymath}
  d_2\bar{d} \pm d_1 \equiv \pm gf \ov{(gf\mp d_2)/d_1}\pmod{d_1d_2c}.
\end{displaymath}
Hence we can recast \eqref{char} as
\begin{displaymath}
   \frac{\phi(c)}{\phi(d_1d_2c)} \sum_{\substack{gh = c\\(g, d_1d_2) = 1}}  \asterisk\sum_{\substack{f\, (d_1^2d_2h)\\ (gf \mp d_2, d_1^2d_2h) = d_1}} \chi\left(\frac{gf\mp d_2}{d_1}\right) e\left(\pm m_1\frac{f\ov{(gf \mp d_2)/d_1} }{d_2h} + m_2 \frac{f}{d_1h} \right).
\end{displaymath}
Using  the change of variable $f \mapsto \bar{f}$, we can summarize the preceding   discussion as
\begin{equation}\label{finalchar}
\begin{split}
 &  S_{\chi}(m_1, m_2, c) e \left(\pm \frac{2\sqrt{m_1m_2}}{c} \right) = e\left(\mp \frac{(\sqrt{d_1^2m_1} - \sqrt{d_2^2m_2})^2}{d_1d_2c}\right) \frac{\phi(c)}{\phi(d_1d_2c)} \\
 &\times  \sum_{\substack{gh = c\\(g, d_1d_2) = 1}}  \asterisk\sum_{\substack{f\, (d_1^2d_2h)\\ (g\bar{f} \mp d_2, d_1^2d_2h) = d_1}} \chi\left( \frac{g\bar{f}\mp d_2}{d_1}\right) e\left(\pm m_1\frac{\ov{(g \mp d_2f)/d_1} }{d_2h} + m_2 \frac{\bar{f}}{d_1h} \right).
 \end{split}
\end{equation}

For later purposes, we prove the following two lemmata.
\begin{lemma}\label{lem6} Let $d_1, d_2, g, h, m_1, m_2$ be positive integers such that $gh$ is divisible by $N$ and $d_1, d_2, g$ are pairwise coprime. Then
\[\Biggl|\asterisk\sum_{\substack{f\, (d_1^2d_2h)\\ (g \mp d_2f, d_1^2d_2h) = d_1}} \chi\left(\frac{g\bar{f} \mp d_2}{d_1}\right) e\left( (m_1 - m_2)\frac{f}{d_1h} \right)\Biggr|
\leq Nd_1d_2\tau(h)^2(h,(m_1-m_2)d_1d_2).\]
\end{lemma}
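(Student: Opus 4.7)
The plan is to linearize the divisibility condition $(g\mp d_2 f,d_1^2d_2h)=d_1$ by a change of variable, and then to bound the resulting sum using Möbius inversion and the Ramanujan sum estimate.

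First, I would fix an integer $f_0$ with $d_2 f_0\equiv\pm g\pmod{d_1}$. Every $f$ contributing to the sum then admits a unique representation $f=f_0+d_1 t$ with $t$ ranging over $\ZZ/d_1d_2h\ZZ$, and $k:=(g\mp d_2 f)/d_1=k_0\mp d_2 t$ where $k_0:=(g\mp d_2 f_0)/d_1\in\ZZ$. Under this substitution, the condition $(g\mp d_2 f,d_1^2d_2h)=d_1$ simplifies to $(k,d_1d_2h)=1$, and the additive phase factors as $e((m_1-m_2)f_0/(d_1h))\cdot e((m_1-m_2)t/h)$.

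Next, I would decouple the $h$- and $d_1d_2$-components by writing $t=a+hb$ with $a\pmod h$ and $b\pmod{d_1d_2}$; a short check shows this is a bijection onto $\ZZ/d_1d_2h\ZZ$, and it reduces the phase to $e((m_1-m_2)a/h)$, independent of $b$. Using $|\chi|\leq 1$, the inner sum over $b$ is bounded trivially by $d_1d_2$, leaving a sum over $a\pmod h$ of an indicator function against the oscillatory factor $e((m_1-m_2)a/h)$.

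The indicator in $a$ is cut out by the coprimality conditions $(f_0+d_1a,h)=1$ and $(k_0\mp d_2a,h)=1$, inherited from $(f,d_1^2d_2h)=1$ and $(k,d_1d_2h)=1$ upon reduction modulo $h$. Expanding both via Möbius inversion yields a double divisor sum over $d',e'\mid h$, producing the factor $\tau(h)^2$. For each $(d',e')$, the remaining complete exponential sum over $a$ is a classical Ramanujan sum modulo a divisor of $h$, of absolute value at most $(h,(m_1-m_2)d_1d_2)$ — the factor $d_1d_2$ in the gcd arises because the linear substitutions imposing $d'\mid f_0+d_1a$ and $e'\mid k_0\mp d_2a$ insert factors of $d_1$ and $d_2$ into the argument of the exponential. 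The factor $N$ in the claimed bound absorbs the contribution of primes $p\mid N$ with $p\nmid d_1d_2h$: at such $p$, the hypothesis $N\mid gh$ forces $p\mid g$, so the corresponding local character factor is constant on the admissible residues and can be pulled out at the cost of at most $N$.

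The main obstacle is the case $(h,d_1d_2)>1$: there the CRT-style decoupling in the second step and the divisor-modulus bookkeeping in the final step must be carried out with enough care that the three targeted factors $d_1d_2$, $\tau(h)^2$, and $(h,(m_1-m_2)d_1d_2)$ emerge with the right constants, rather than being polluted by a stray factor of $(h,d_1d_2)$.
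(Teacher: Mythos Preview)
There is a genuine gap in your third paragraph. After the substitution $t=a+hb$ the additive phase indeed depends only on $a$, but the multiplicative character $\chi\bigl((g\bar f\mp d_2)/d_1\bigr)$ \emph{also} depends on $a$ (in fact on $a\pmod{h_1}$ with $h_1:=N/(N,g)\mid h$), not on $b$. Hence the inner $b$-sum carries no character; one can check it equals a constant $C\leq d_1d_2$ on the support of the $a$-conditions, so the whole sum becomes $C\sum_{a\,(h)} I(a)\,\chi(c(a))\,e\bigl((m_1-m_2)a/h\bigr)$. Invoking $|\chi|\leq 1$ at this stage destroys the oscillation entirely: you are left with $\leq C\sum_a I(a)$, which is far too big. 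The character must be separated \emph{structurally}, not bounded trivially, before the M\"obius/Ramanujan step can be applied.

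The paper achieves precisely this separation by a different splitting: writing $f=f_1+k\cdot d_1^2d_2h_1$, the character and the $d_1,d_2$-coprimality conditions depend only on $f_1$, while the inner $k$-sum over $\ZZ/h_2\ZZ$ carries just the additive character $e\bigl((m_1-m_2)d_1d_2k/h_2\bigr)$ together with two coprimality constraints modulo $h_2$. M\"obius inversion on those yields $\tau(h_2)^2$ complete exponential sums each bounded by $(h_2,(m_1-m_2)d_1d_2)$, and the trivial estimate on the outer $f_1$-sum contributes the factor $Nd_1d_2$ (since $h_1\leq N$). Your approach can be rescued by further writing $a=a_1+h_1a_2$ and summing trivially over $a_1$---but that is essentially the paper's decomposition in different coordinates. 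Your explanation of the factor $N$ (primes $p\mid N$ with $p\nmid d_1d_2h$) is also off: the factor $N$ enters through the period $h_1\leq N$ of the character in the $a$-variable, not through primes outside $h$.
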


\begin{proof} We use first that $\chi$ is a character modulo $N$ which divides $gh$.
Writing
\[h=h_1h_2,\qquad h_1:=\frac{N}{(N,g)},\]
we see that the character sum in question equals
\[\asterisk\sum_{\substack{f_1\, (d_1^2d_2h_1)\\ (g \mp d_2f_1, d_1^2d_2h_1) = d_1}} \chi\left(\frac{g\bar{f_1} \mp d_2}{d_1}\right)
  \asterisk\sum_{\substack{f\, (d_1^2d_2h)\\ (g \mp d_2f, d_1^2d_2h) = d_1\\ f\equiv f_1\,(d_1^2d_2h_1)}}
  e\left( (m_1 - m_2)\frac{f}{d_1h} \right).\]
With the notation
\[f=:f_1+kd_1^2d_2h_1,\qquad g_1:=\frac{g\mp d_2f_1}{d_1}\]
this becomes
\[\asterisk\sum_{\substack{f_1\, (d_1^2d_2h_1)\\ (g \mp d_2f_1, d_1^2d_2h_1) = d_1}}
\chi\left(\frac{g\bar{f_1} \mp d_2}{d_1}\right) e\left( (m_1 - m_2)\frac{f_1}{d_1h} \right)
\sum_{\substack{k\, (h_2)\\ (f_1+kd_1^2d_2h_1,h_2)=1\\ (g_1 \mp kd_1d_2^2h_1,h_2)=1}}
e\left( (m_1 - m_2)d_1d_2\frac{k}{h_2} \right).\]
The $k$-sum equals
\[\sum_{\substack{\ell_1\mid h_2\\\ell_2\mid h_2}}\mu(\ell_1)\mu(\ell_2)\sum_{\substack{k\,(h_2)\\ kd_1^2d_2h_1\equiv -f_1\,(\ell_1)\\ kd_1d_2^2h_1\equiv \pm g_1\,(\ell_2)}}e\left( (m_1 - m_2)d_1d_2\frac{k}{h_2} \right).\]
It is straightforward to evaluate the inner sum explicitly, but for our purposes it suffices to record that it is at most $(h_2,(m_1-m_2)d_1d_2)$ which gives the lemma.
\end{proof}

\begin{lemma} For $r \in \RR$ and $\Re s>1/2$ let
\begin{equation}\label{defXi}
\Xi_r^\pm(s):=\sum_{\substack{(\delta,N)=1\\(d_1,d_2)=1}}
\frac{\chi(d_1)\ov{\chi(d_2)}}{\delta^{\lambda+\mu}(d_1d_2)^\lambda}
\sum_{\substack{gh\equiv 0\,(N)\\(g,d_1d_2)=1}}
\frac{1}{g^\mu h^\lambda}
\frac{\phi(gh)}{\phi(d_1d_2gh)}\asterisk\sum_{\substack{f\,(d_1^2d_2h)\\(gf\mp d_2,d_1^2d_2h)=d_1}}\chi\left(\frac{gf\mp d_2}{d_1}\right),
\end{equation}
where $\chi$ is an even primitive Dirichlet character modulo $N$ and
\[\lambda:=2+2s+2ir,\qquad\mu:=2s-2ir.\]
Then
\begin{equation}\label{finalXi}
  \Xi_r^\pm(s) = \zeta(2s-2ir)\zeta(1+2s+2ir)N^{-2s+2ir} \prod_{p \mid N} (1-p^{-2-4ir}) .
\end{equation}
\end{lemma}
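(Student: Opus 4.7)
\textit{Plan.} The strategy is to establish an Euler product decomposition $\Xi_r^\pm(s)=\prod_p\Xi_{r,p}^\pm(s)$ and then evaluate each local factor in closed form. Every constraint in the definition splits independently prime by prime: $(\delta,N)=1$ forbids a single valuation, $(d_1,d_2)=1$ means $\min(v_p(d_1),v_p(d_2))=0$, $(g,d_1d_2)=1$ means $(v_p(d_1)+v_p(d_2))v_p(g)=0$, and $N\mid gh$ is the condition $v_p(g)+v_p(h)\ge v_p(N)$ for each $p\mid N$. The multiplicative weights $\delta^{-(\lambda+\mu)}$, $\chi(d_1)\ov{\chi(d_2)}(d_1d_2)^{-\lambda}$, $g^{-\mu}h^{-\lambda}$, and the totient ratio $\phi(gh)/\phi(d_1d_2gh)$ are all multiplicative in the usual sense. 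Finally, the inner character sum admits a CRT factorization over the prime decomposition of the modulus $d_1^2d_2h$: the condition $v_p(gf\mp d_2)=v_p(d_1)$ localizes at each prime, and the character decomposes as $\chi=\prod_{p\mid N}\chi_p$, with the value $\chi_p\bigl((gf\mp d_2)/d_1\bigr)$ depending only on $f\bmod p^{v_p(N)}$ thanks to the constraint $v_p(g)+v_p(h)\ge v_p(N)$.

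At primes $p\nmid N$ the local character $\chi_p$ is trivial, so the inner sum is a pure count of local $f_p$. I would enumerate the casework based on which of $(a,b,\gamma,\eta):=(v_p(d_1),v_p(d_2),v_p(g),v_p(h))$ vanish, subject to $\min(a,b)=0$ and $(a+b)\gamma=0$. In each subcase the $f$-count is an explicit function of the local valuations, the totient ratio simplifies, and the $\delta$-sum contributes $(1-p^{-(\lambda+\mu)})^{-1}$. Summing the resulting geometric series in each of the three strata ($a=b=0$, $a\ge 1$ with $b=\gamma=0$, and the mirror case $b\ge 1$) and adding the pieces should collapse to the Euler factor $(1-p^{-\mu})^{-1}(1-p^{-(\lambda-1)})^{-1}$ of $\zeta(\mu)\zeta(\lambda-1)=\zeta(2s-2ir)\zeta(1+2s+2ir)$, with the seemingly problematic outer character contributions $\chi(p)^{a-b}$ rearranging against the $\phi$-ratio and the gcd count to produce a $\chi(p)$-free answer.

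At primes $p\mid N$ the setup is more rigid: $v_p(\delta)=0$ and non-vanishing of $\chi(d_1)\ov{\chi(d_2)}$ forces $v_p(d_1)=v_p(d_2)=0$, while $v_p(g)+v_p(h)\ge v_p(N)$ remains. The inner sum becomes a primitive character sum modulo $p^{v_p(N)}$ of the shape $\sum_{f}^*\chi_p\bigl((gf\mp d_2)\ov{d_1}\bigr)$ with a coprimality cutoff, which I would evaluate using primitivity and orthogonality (or, equivalently, a Gauss-sum identity). This should yield the correction factor $p^{-\mu v_p(N)}(1-p^{-(\lambda-\mu)})$ at each such prime, and since $\mu=2s-2ir$ and $\lambda-\mu=2+4ir$, multiplying across $p\mid N$ reconstructs precisely $N^{-2s+2ir}\prod_{p\mid N}(1-p^{-2-4ir})$. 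Assembling the local factors over all $p$ then gives the identity.

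The main obstacle is the step at $p\mid N$, where the interaction between primitivity of $\chi_p$, the coprimality condition on $f$, and the $\gamma+\eta\ge v_p(N)$ constraint must be unpacked carefully; the bookkeeping at $p\nmid N$ is conceptually routine but requires attention to verify the final $\chi$-independent shape. Finally, the independence of the right-hand side on the sign choice $\pm$ reflects the evenness hypothesis on $\chi$: the substitution $f\mapsto -f$ interchanges the two variants of the $f$-sum, and $\chi(-1)=1$ ensures the corresponding character sums agree.
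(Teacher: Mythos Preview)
Your overall strategy---establish an Euler product and compute each local factor---is exactly the paper's approach, and your predicted local answers at $p\nmid N$ and $p\mid N$ are correct. But there is a genuine gap in how you set up the factorization, and it shows up precisely in the phrase ``the seemingly problematic outer character contributions $\chi(p)^{a-b}$ rearranging against the $\phi$-ratio and the gcd count.'' That is not what happens, and pursuing it would not close.

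The point is that $\chi(d_1)\ov{\chi(d_2)}$ does \emph{not} contribute a factor $\chi(p)^{a-b}$ to the local Euler factor at $p\nmid N$. Since $\chi=\prod_{q\mid N}\chi_q$, the weight $\chi(d_1)\ov{\chi(d_2)}$ lives entirely at the primes $q\mid N$; there is no local count or totient at $p\nmid N$ for it to cancel against. Conversely, at $q\mid N$ your local $f_q$-sum $\sum_{f_q}\chi_q\bigl((gf\mp d_2)/d_1\bigr)$ depends not only on $f_q$ but also on the residues of $d_1,d_2,g$ modulo $q^{v_q(N)}$, which is \emph{global} information (it sees all prime factors of $d_1,d_2,g$). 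So as you have written it, the summand does not split as a product of functions of local valuations, and the Euler product is not yet justified.

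The paper fixes this cleanly by first \emph{undoing} the earlier change of variable: the bijection between $f\pmod{d_1^2d_2h}$ and $d\pmod{d_1d_2gh}$ described above \eqref{finalchar} converts the inner sum into
\[
\frac{\phi(gh)}{\phi(d_1d_2gh)}\ \asterisk\sum_{\substack{f\,(d_1^2d_2h)\\(gf\mp d_2,d_1^2d_2h)=d_1}}\chi\Bigl(\tfrac{gf\mp d_2}{d_1}\Bigr)
\;=\;\asterisk\sum_{\substack{d\,(gh)\\(d_1d\pm d_2,gh)=g}}\chi(d),
\]
so that, after absorbing $\chi(d_1)\ov{\chi(d_2)}$, one is studying
\[
S(\chi,g,h,d_1,d_2):=\chi(d_1)\ov{\chi(d_2)}\ \asterisk\sum_{\substack{d\,(gh)\\(d_1d\pm d_2,gh)=g}}\chi(d).
\]
Now CRT in $gh$ (with $\chi_p$ the primitive character modulo $p^{v_p(N)}$, trivial for $p\nmid N$) gives a genuine product over primes, and the substitution $d\mapsto\mp\,\ov{d_1}d_2\,d$ at each $p\mid N$ shows that the local factor depends only on the $p$-parts of $d_1,d_2$. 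This is the missing step that makes your Euler product legitimate; once it is in place, your sketched local computations go through and match the paper's Cases~1 and~2. The independence of the sign is then visible as the harmless factor $\prod_{p\mid N}\chi_p(\mp 1)=\chi(\mp 1)=1$.
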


\begin{proof}
By the remarks below \eqref{char} the innermost sum in \eqref{defXi} can be rewritten as
\[\asterisk\sum_{\substack{f\,(d_1^2d_2h)\\(gf\mp d_2,d_1^2d_2h)=d_1}}\chi\left(\frac{gf\mp d_2}{d_1}\right)
=\asterisk\sum_{\substack{d\,(d_1d_2gh)\\(d_1d\pm d_2,gh)=g}}\chi(d).\]
On the right hand side the condition on $d$ only depends on $d\;\mod gh$, therefore
\[\frac{\phi(gh)}{\phi(d_1d_2gh)}\asterisk\sum_{\substack{f\,(d_1^2d_2h)\\(gf\mp d_2,d_1^2d_2h)=d_1}}\chi\left(\frac{gf\mp d_2}{d_1}\right)
=\asterisk\sum_{\substack{d\,(gh)\\(d_1d\pm d_2,gh)=g}}\chi(d).\]
With the notation
\[S(\chi,g,h,d_1,d_2):=\chi(d_1)\ov{\chi(d_2)}\asterisk\sum_{\substack{d\,(gh)\\(d_1d\pm d_2,gh)=g}}\chi(d)\]
we can now write
\[\Xi_r^\pm(s)=\sum_{(\delta,N)=1}\frac{1}{\delta^{\lambda+\mu}}
\sum_{\substack{(d_1d_2,N)=1\\(d_1,d_2)=1}}\frac{1}{(d_1d_2)^\lambda}
\sum_{\substack{gh\equiv 0\,(N)\\(g,d_1d_2)=1}}
\frac{1}{g^\mu h^\lambda}S(\chi,g,h,d_1,d_2).\]
The character sum $S(\chi,g,h,d_1,d_2)$ is multiplicative in $\chi,g,h$ in the following sense. If $g=g_1g_2$
and $h=h_1h_2$ are any decompositions such that $(g_1h_1,g_2h_2)=1$, and correspondingly $\chi=\chi_1\chi_2$ where $\chi_i$ is a Dirichlet character modulo $g_ih_i$, then
\[S(\chi,g,h,d_1,d_2)=S(\chi_1,g_1,h_1,d_1,d_2)S(\chi_2,g_2,h_2,d_1,d_2).\]
Indeed, this follows easily upon writing the summation variable $d\;\mod gh$ as
\[d=\tilde d_1e_1+\tilde d_2e_2,\qquad \tilde d_i\;\mod g_ih_i,\] where
$e_1$ and $e_2$ are fixed integers such that
\begin{align*}
&e_1\equiv 1\,(\mod g_1h_1),\qquad e_1\equiv 0\,(\mod g_2h_2);\\
&e_2\equiv 0\,(\mod g_1h_1),\qquad e_2\equiv 1\,(\mod g_2h_2).
\end{align*}
In other words, we have a decomposition over the primes
\[S(\chi,g,h,d_1,d_2)=\prod_p S(\chi_p,g_p,h_p,d_1,d_2),\]
where the subscript $p$ denotes the $p$-part.

Let us fix $p$ for a moment and use the notation
\[g_p=p^\gamma,\qquad h_p=p^\delta,\qquad N_p=p^\nu.\]
Note that $\chi_p$ is a primitive Dirichlet character modulo $p^\nu$.\\[6pt]
{\it Case 1.} If $\nu=0$ (i.e. $p\nmid N$), then $\chi_p$ is trivial, so that
\[S(\chi_p,g_p,h_p,d_1,d_2)=
\asterisk\sum_{\substack{d\,(p^{\gamma+\delta})\\(d_1d\pm d_2,p^{\gamma+\delta})=p^\gamma}}1
=\begin{cases}
1,&\delta=0,\\
p^\delta-p^{\delta-1},&\delta>0\text{ and }p\mid d_1d_2p^\gamma,\\
p^\delta-2p^{\delta-1},&\delta>0\text{ and }p\nmid d_1d_2p^\gamma.
\end{cases}\]
For $\delta=0$ the right hand side follows by observing that there is a unique $d$
satisfying the condition, since $\gamma>0$ implies $p\nmid d_1d_2$. For $\delta>0$ and $p\mid d_1d_2$
the right hand side follows by observing that $\gamma=0$ and exactly one of $d_1$ and $d_2$ is divisible by $p$, hence the condition on $d$ is automatically satisfied. For $\delta>0$ and $\gamma>0$ the right hand side follows by observing that $p\nmid d_1d_2$, so that the $d$'s satisfying the condition are in bijection with the reduced residues modulo $p^\delta$. For $\delta>0$, $\gamma=0$, and $p\nmid d_1d_2$ the right hand side follows by observing that the condition on $d$ is $(d_1d\pm d_2,p)=1$ and there are precisely $p^{\delta-1}$ reduced residue classes modulo $p^\delta$ that do not have this property.\\[6pt]
{\it Case 2.} If $\nu>0$ (i.e. $p\mid N$), then $p\nmid d_1d_2$, $\gamma+\delta\geq\nu$, and $\chi_p$ induces a nontrivial character modulo $p^{\gamma+\delta}$ of conductor $p^\nu$, so that
\begin{align*}
\chi_p(\mp 1)S(\chi_p,g_p,h_p,d_1,d_2)
&=\asterisk\sum_{\substack{d\,(p^{\gamma+\delta})\\(d_1d\pm d_2,p^{\gamma+\delta})=p^\gamma}}\chi_p(\mp \ov{d_2}d_1d)
=\asterisk\sum_{\substack{d\,(p^{\gamma+\delta})\\(d-1,p^{\gamma+\delta})=p^\gamma}}\chi_p(d)\\
&=\begin{cases}
1,&\delta=0\text{ and }\gamma\geq\nu,\\
p^\delta-p^{\delta-1},&\delta>0\text{ and }\gamma\geq\nu,\\
-p^{\delta-1},&\delta>0\text{ and }\gamma=\nu-1,\\
0,&\delta>0\text{ and }\gamma<\nu-1.
\end{cases}
\end{align*}
For $\delta=0$ and $\gamma\geq\nu$ the right hand side follows by observing that there is a unique $d$
satisfying the condition in the second sum, and this $d$ is congruent to $1$ modulo $p^\nu$. For $\delta>0$ and $\gamma\geq\nu$ the right hand side follows by observing that the $d$'s satisfying the condition in the second sum
are in bijection with the reduced residues modulo $p^\delta$, and all these $d$'s are congruent to $1$ modulo $p^\nu$. For $\delta>0$ and $\gamma=\nu-1$ the right hand side follows by observing that the condition on $d$ only depends on $d\;\mod p^\nu$, hence in this case
\[\asterisk\sum_{\substack{d\,(p^{\gamma+\delta})\\(d-1,p^{\gamma+\delta})=p^\gamma}}\chi_p(d)=
p^{\delta-1}\asterisk\sum_{\substack{d\,(p^\nu)\\(d-1,p^\nu)=p^{\nu-1}}}\chi_p(d),\]
and the sum on the right hand side equals $-1$. Indeed, for $\nu=1$ this is obvious, while for $\nu>1$ it follows from the fact that $u\mapsto\chi_p(p^{\nu-1}u+1)$ is a nontrivial additive character modulo $p$. Finally, for $\delta>0$ and $\gamma<\nu-1$ the right hand side follows by observing that the second sum does not change when we multiply it with a complex number of the form $\chi_p(p^{\nu-1}u+1)\neq 1$.
\medskip

Our findings imply that
\[S(\chi_p,g_p,h_p,d_1,d_2)=S(\chi_p,g_p,h_p,d_{1,p},d_{2,p}),\]
hence we have an Euler product decomposition
\[\Xi_r^\pm(s)=\prod_p\Xi_{r,p}^\pm(s),\]
where for $p\nmid N$
\[\Xi_{r,p}^\pm(s):=
\frac{1}{1-p^{-\lambda-\mu}}
\sum_{\substack{\alpha,\beta\geq 0\\\min(\alpha,\beta)=0}}\frac{1}{p^{(\alpha+\beta)\lambda}}
\sum_{\substack{\gamma,\delta\geq 0\\\min(\gamma,\alpha+\beta)=0}}
\frac{1}{p^{\gamma\mu+\delta\lambda}}S(\chi_p,p^\gamma,p^\delta,p^\alpha,p^\beta),\]
while for $p\mid N$
\[\Xi_{r,p}^\pm(s):=\chi_p(\mp 1)\sum_{\substack{\gamma,\delta\geq 0\\\gamma+\delta\geq\nu}}
\frac{1}{p^{\gamma\mu+\delta\lambda}}S(\chi_p,p^\gamma,p^\delta,1,1).\]
Now we insert the above calculated values of $S(\chi_p,p^\gamma,p^\delta,p^\alpha,p^\beta)$.\\[6pt]
{\it Case 1.} If $p\nmid N$, then we obtain
\begin{align*}(1-p^{-\lambda-\mu})\Xi_{r,p}^\pm(s)
&=\sum_{\gamma=0}^\infty\frac{1}{p^{\gamma\mu}}+\frac{p-2}{p}\sum_{\delta=1}^\infty\frac{1}{p^{\delta(\lambda-1)}}
+\frac{p-1}{p}\sum_{\gamma=1}^\infty\sum_{\delta=1}^\infty\frac{1}{p^{\gamma\mu+\delta(\lambda-1)}}\\
&+\left(\sum_{\alpha=1}^\infty\frac{1}{p^{\alpha\lambda}}+\sum_{\beta=1}^\infty\frac{1}{p^{\beta\lambda}}\right)
\left(1+\frac{p-1}{p}\sum_{\delta=1}^\infty\frac{1}{p^{\delta(\lambda-1)}}\right).
\end{align*}
Indeed, the first line contains the contribution of $\alpha=\beta=0$, and the second line contains the rest (where $\gamma$ must be zero). We sum all the geometric series:
\begin{align*}
(1-&p^{-\lambda-\mu})\Xi_{r,p}^\pm(s)\\
&=\frac{1}{1-p^{-\mu}}+\frac{p-2}{p^\lambda(1-p^{1-\lambda})}+\frac{p-1}{p^{\lambda+\mu}(1-p^{1-\lambda})(1-p^{-\mu})}
+\frac{2}{p^\lambda(1-p^{-\lambda})}\left(1+\frac{p-1}{p^\lambda(1-p^{1-\lambda})}\right)\\
&=\frac{(1-p^{1-\lambda})+p^{-\lambda}(p-2)(1-p^{-\mu})+p^{-\lambda-\mu}(p-1)}{(1-p^{1-\lambda})(1-p^{-\mu})}
+\frac{2p^{-\lambda}}{1-p^{1-\lambda}}\\
&=\frac{1-2p^{-\lambda}+p^{-\lambda-\mu}}{(1-p^{1-\lambda})(1-p^{-\mu})}+\frac{2p^{-\lambda}-2p^{-\lambda-\mu}}{(1-p^{1-\lambda})(1-p^{-\mu})}\\
&=\frac{1-p^{-\lambda-\mu}}{(1-p^{1-\lambda})(1-p^{-\mu})}.
\end{align*}
\medskip
Hence for $p\nmid N$ we have
\[\Xi_{r,p}^\pm(s)=\frac{1}{(1-p^{1-\lambda})(1-p^{-\mu})}.\]
\\[6pt]
{\it Case 2.} If $p\mid N$, then we obtain for $p^\nu\parallel N$
\[\chi_p(\mp 1)\Xi_{r,p}^\pm(s)=\sum_{\gamma=\nu}^\infty\frac{1}{p^{\gamma\mu}}
-\frac{1}{p^{1+(\nu-1)\mu}}\sum_{\delta=1}^\infty\frac{1}{p^{\delta(\lambda-1)}}
+\frac{p-1}{p}\sum_{\gamma=\nu}^\infty\sum_{\delta=1}^\infty\frac{1}{p^{\gamma\mu+\delta(\lambda-1)}}.\]
We sum all the geometric series:
\begin{align*}
\chi_p(\mp 1)\Xi_{r,p}^\pm(s)
&=\frac{1}{p^{\nu\mu}(1-p^{-\mu})}-\frac{1}{p^{\lambda+(\nu-1)\mu}(1-p^{1-\lambda})}
+\frac{p-1}{p^{\lambda+\nu\mu}(1-p^{1-\lambda})(1-p^{-\mu})}\\
&=\frac{(1-p^{1-\lambda})-p^{-\lambda+\mu}(1-p^{-\mu})+p^{-\lambda}(p-1)}{p^{\nu\mu}(1-p^{1-\lambda})(1-p^{-\mu})}\\
&=\frac{1-p^{-\lambda+\mu}}{p^{\nu\mu}(1-p^{1-\lambda})(1-p^{-\mu})}.
\end{align*}
Hence for $p\mid N$ we have
\[\Xi_{r,p}^\pm(s)=\chi_p(\mp 1)p^{-\nu\mu}\frac{1-p^{-\lambda+\mu}}{(1-p^{1-\lambda})(1-p^{-\mu})}.\]
Collecting the above results we arrive at \eqref{finalXi}.
\end{proof}

\subsection{Applying Voronoi summation}

We substitute \eqref{finalchar} back into \eqref{transformed} getting
\begin{equation}\label{new}
\begin{split}
TK \sum_{\substack{M_1, M_2, G\geq 1\\ \text{powers of two}}}   &  \sum_{\pm} \frac{1\pm i}{2}\sum_{\substack{(\delta,N)=1\\ (d_1, d_2) = 1}} \frac{\chi(d_1)\ov{\chi(d_2)}}{\delta^2d_1d_2}\sum_{N \mid c}\frac{\phi(c)}{\phi(d_1d_2c)}\frac{\omega(c/G)}{c^{1/2}}\\
&\times\sum_{\substack{gh = c\\(g, d_1d_2) = 1}}
\asterisk\sum_{\substack{f\, (d_1^2d_2h)\\ (g\bar{f} \mp d_2, d_1^2d_2h) = d_1}} \chi\left( \frac{g\bar{f}\mp d_2}{d_1}\right) \mathcal{S}^{M_1, M_2}_{d_1, d_2, c}(f, g, h),
\end{split}
\end{equation}
where
\begin{displaymath}
\begin{split}
 \mathcal{S}^{M_1, M_2}_{d_1, d_2, c}(f, g, h) & := \sum_{m_1, m_2}    \lambda_g(m_1)  \ov{\lambda_g(m_2) } e\left(\pm m_1\frac{\ov{(g \mp d_2f)/d_1} }{d_2h} + m_2 \frac{\bar{f}}{d_1h} \right) F^{M_1, M_2}_{d_1, d_2, c}(m_1, m_2),\\
 F^{M_1, M_2}_{d_1, d_2, c}(x, y) & :=  \frac{\omega(x/M_1)\omega(y/M_2)}{(xy)^{3/4}}\Omega_3\left(T \log \frac{d_2^2y}{d_1^2x}, \frac{\pm K^2c}{8\pi \sqrt{xy}}, \frac{\delta^4d_1^2d_2^2xy}{\sqrt{\tilde{C}}}\right)e\left(\mp \frac{(\sqrt{d_1^2x} - \sqrt{d_2^2y})^2}{d_1d_2c}\right).
\end{split}
\end{displaymath}
By applying Proposition~\ref{voro} for the summation variables $m_1$ and $m_2$, we see that \[d_1d_2h^2\mathcal{S}^{M_1, M_2}_{d_1, d_2, c}(f, g, h)\] is a sum of terms (suppressing $M_1$ and $M_2$ from the notation for simplicity)
\begin{equation}\label{vormain}\sum_{m_1, m_2} \lambda_g(m_1)\ov{\lambda_g(m_2)}e\left(\mp m_1\frac{g}{d_1d_2h}\mp(m_2\mp m_1)\frac{f}{d_1 h}\right)F_{d_1,d_2,c}^{\pm,\pm}(m_1,m_2)\end{equation}
with
\begin{equation}\label{Fdef2}F_{d_1,d_2,c}^{\pm,\pm}(m_1,m_2):=\int_0^\infty\int_0^\infty F^{M_1, M_2}_{d_1,d_2,c}(x,y)\,
J_g^\pm\left(\frac{4\pi\sqrt{m_1x}}{d_2h}\right)J_g^\pm\left(\frac{4\pi\sqrt{m_2y}}{d_1h}\right)dx\,dy.\end{equation}
If $g = E_r$ is an Eisenstein series, there are three additional polar terms
\begin{align}
\label{newpolar1}&\sum_{m_1}\lambda_g(m_1)e\left(\mp m_1\frac{g\mp d_2f}{d_1d_2h}\right)\int_0^\infty F_{d_1, d_2, c}^{\pm, 0}(m_1, y) \,P_{r,d_1h}^\pm(y)\,dy,\\
\label{newpolar2}&\sum_{m_2}\ov{\lambda(m_2)}e\left(\mp m_2\frac{f}{d_1h}\right)\int_0^\infty F_{d_1,d_2,c}^{0,\pm}(x,m_2)\,P_{r,d_2h}^\pm(x)\,dx,\\
\label{newpolar3}&\int_0^\infty\int_0^\infty F^{M_1, M_2}_{d_1, d_2, c}(x,y)\,P_{r,d_2h}^\pm(x)\,P_{r,d_1h}^\pm(y)\,dx\,dy,
\end{align}
with
\begin{align}
\label{Fplus} F_{d_1,d_2,c}^{\pm,0}(m_1,y)&:=\int_0^\infty F^{M_1, M_2}_{d_1,d_2,c}(x,y)\,J_g^\pm\left(\frac{4\pi\sqrt{m_1x}}{d_2h}\right)dx,\\
\label{Fminus} F_{d_1,d_2,c}^{0,\pm}(x,m_2)&:=\int_0^\infty F^{M_1, M_2}_{d_1,d_2,c}(x,y)\,J_g^\pm\left(\frac{4\pi\sqrt{m_2y}}{d_1h}\right)dy,\\[4pt]
\label{Pdefinition}P_{r,c}^\pm(t)&:=\begin{cases}
\zeta(1\pm 2ir)(t/c^2)^{\pm ir},&\text{for $r\neq 0$},\\
\log(t/c^2)+2\gamma,&\text{for $r=0$}.
\end{cases}
\end{align}
We proceed to analyze the four terms \eqref{vormain}, \eqref{newpolar1}, \eqref{newpolar2}, \eqref{newpolar3}. It will turn out that the first $3$ terms are small, but \eqref{newpolar3} contributes to the main term.

\subsection{The contribution of \eqref{vormain}}
We observe first that in \eqref{Fdef2} the arguments of the Bessel functions are large:
\begin{displaymath}
  \frac{4\pi\sqrt{m_1x}}{d_2h} \gg \frac{\sqrt{M_1}}{d_2G} \gg \frac{\sqrt{M_1}K^2}{\tilde C^{\eps} d_2 \sqrt{M_1M_2}} \asymp \frac{K^2}{\tilde C^{\eps}(d_1d_2)^{1/2} (M_1M_2)^{1/4}} \gg \frac{K^2}{\tilde C^{\eps}(T+K)}
\end{displaymath}
by \eqref{sizes}, \eqref{sizeM} and the fact that $h \leq c$. A similar estimate holds  for $4\pi \sqrt{m_2y}/(d_1h)$. In view of \eqref{assumption} this is large. In particular, by the rapid decay of the Bessel $K$-function \eqref{besselK}, among $F_{d_1,d_2,c}^{\pm,\pm}(m_1,m_2)$ we only need to consider $F_{d_1,d_2,c}^{+,+}(m_1,m_2)$ as the contribution of the other $3$ expressions is negligible (or zero).

We show now that the sum \eqref{vormain} can be truncated efficiently.
Fix any $y \in [(1/2) M_2, (5/2)M_2]$ in the integral defining
$F_{d_1,d_2,c}^{+,+}(m_1,m_2)$. By \eqref{close}, we can restrict the $x$-integration to
\begin{equation}\label{close2}
d_1^2 x=d_2^2y\bigl(1+O(T^{\eps-1})\bigr)
\end{equation} at the cost of a negligible error. In this range we have
\[\frac{\partial^j}{\partial x^j}F_{d_1, d_2, c}(x, y)\ll_{j}\tilde C^\eps\left(\frac{T}{M_1} + \frac{d_1}{Td_2c}\right)^j,\]
so that by Lemma~\ref{lemma4} the integral is negligible unless
\begin{displaymath}
  \left(\frac{T}{M_1} + \frac{d_1}{Td_2c}\right)\frac{\sqrt{M_1}d_2h}{\sqrt{m_1}} \geq \tilde C^{-\eps},
\end{displaymath}
that is
\begin{equation}\label{m1}
  m_1 \leq  M^{\ast}_1 := \tilde C^{\eps} d_2^2\left(\frac{(T h)^2}{M_1} + \frac{M_2 }{(Tg)^2}\right).
\end{equation}
Here we used \eqref{sizeM} and the fact that $gh=c$. Similarly, we can assume
\begin{equation}\label{m2}
   m_2 \leq M_2^{\ast} := \tilde C^{\eps} d_1^2\left(\frac{(Th )^2}{M_2} + \frac{M_1 }{(Tg)^2}\right).
\end{equation}
For convenience we observe, by \eqref{sizeM},
\begin{equation}\label{mstar}
  M^\ast:=\max(M_1^{\ast},M_2^{\ast})\asymp \tilde C^{\eps} d_1d_2
  \left(\frac{(Th)^2}{(M_1M_2)^{1/2}}+\frac{(M_1M_2)^{1/2}}{(Tg)^2}\right).
\end{equation}
To summarize,
\begin{equation}\label{almostfinal}
\begin{split}
  \mathcal{S}^{M_1, M_2}_{d_1, d_2, c}(f, g, h) = \frac{1}{d_1d_2h^2} & \sum_{m_1 \leq M^{\ast}} \lambda_g(m_1) e\left(\mp \frac{m_1g}{d_1d_2h}\right) \sum_{m_2 \leq M^{\ast}} \ov{\lambda_g(m_2)}  e\left(( m_1 - m_2)\frac{f}{d_1h} \right) \\ & \times \int_0^{\infty}\int_0^{\infty} F^{M_1, M_2}_{d_1, d_2, c}(x, y) J_g^{+}\left(\frac{4\pi \sqrt{m_1x}}{d_2h}\right)J_g^{+}\left(\frac{4\pi \sqrt{m_2y}}{d_1h}\right) dx\,dy,
  \end{split}
\end{equation}
up to negligible error terms and the contribution of the three polar terms \eqref{newpolar1}--\eqref{newpolar3} that we discuss in a moment.

If we substitute this back into \eqref{new}, then the summation over $f$ produces
the exponential sum
\begin{displaymath}
   \asterisk\sum_{\substack{f\, (d_1^2d_2h)\\ (g \mp d_2f, d_1^2d_2h) = d_1}} \chi\left(\frac{g\bar{f} \mp d_2}{d_1}\right) e\left( (m_1 - m_2)\frac{f}{d_1h} \right).
\end{displaymath}
We estimate the double integral in \eqref{almostfinal} using \eqref{sizes}, \eqref{close2}, and \eqref{Bessellarge} as
\[\ll (TM_1M_2)^{\eps-1}\min\left(\frac{d_1^2M_1^2}{d_2^2},\frac{d_2^2M_2^2}{d_1^2}\right)
\frac{(d_1d_2)^{1/2}h}{(m_1m_2)^{1/4}}\ll \tilde C^{\eps} \frac{(d_1d_2)^{1/2}h}{T(m_1m_2)^{1/4}}.\]
By Lemma \ref{lem6} and the support of $\omega$, the contribution of \eqref{vormain} to \eqref{new} is
\begin{equation*}
\begin{split}
&\ll TK \tilde C^{\eps}   \sum_{  \delta^2d_1d_2 \leq \tilde C^{1/4+\eps}  } \frac{1}{\delta^2(d_1d_2)^{3/2}}
\sum_{\substack{gh \leq 3G\\m_1,m_2 \leq M^{\ast}}} \frac{(h, (m_1-m_2)d_1d_2)}{(gh)^{1/2}hT}\frac{|\lambda_g(m_1)\lambda_g(m_2)|}{(m_1m_2)^{1/4}} \\
& \ll TK \tilde C^{\eps}   \sum_{  \delta^2d_1d_2 \leq \tilde C^{1/4+\eps}  } \frac{1}{\delta^2(d_1d_2)^{3/2}}   \sum_{gh \leq 3G}\frac{1}{(gh)^{1/2}hT} \sum_{\ell\mid h}\ell\sum_{\substack{ m_1, m_2 \leq M^{\ast}\\ \ell\mid(m_1-m_2)d_1d_2}}  \frac{|\lambda_g(m_1)\lambda_g(m_2)|}{(m_1m_2)^{1/4}}.
\end{split}
\end{equation*}
By \eqref{boundfourier} the innermost sum is
\[\leq\sum_{\substack{ m_1, m_2 \leq M^{\ast}\\ \ell\mid(m_1-m_2)d_1d_2}}  \frac{1}{2}\left(\frac{|\lambda_g(m_1)|^2}{m_1^{1/2}}+\frac{|\lambda_g(m_2)|^2}{m_2^{1/2}}\right)
\ll (M^{\ast})^{1/2+\eps}\left(1+\frac{M^\ast(\ell,d_1d_2)}{\ell}\right),\]
hence in the end the contribution of \eqref{vormain} to \eqref{new} is, using also \eqref{sizes} and \eqref{mstar},
\begin{equation}\label{offerror2}
\begin{split}
&\ll TK \tilde C^{\eps} \sum_{  \delta^2d_1d_2 \leq \tilde C^{1/4+\eps}  } \frac{1}{\delta^2(d_1d_2)^{3/2}}   \sum_{gh \leq 3G}\frac{1}{(gh)^{1/2}hT} \left(h(M^{\ast})^{1/2} + (h,d_1d_2)(M^{\ast})^{3/2}\right)\\
&\ll TK \tilde C^{\eps} \sum_{  \delta^2d_1d_2 \leq \tilde C^{1/4+\eps}  } \frac{1}{\delta^2}  \left(\frac{G^{3/2}}{(d_1d_2)(M_1M_2)^{1/4}} + \frac{G^{1/2}(M_1M_2)^{1/4}}{(d_1d_2)T^2} + \frac{T^2G^{5/2}}{(M_1M_2)^{3/4}} + \frac{(M_1M_2)^{3/4}}{T^4}\right)\\
&\ll TK \tilde C^{\eps} \sum_{  \delta^2d_1d_2 \leq \tilde C^{1/4+\eps}  } \frac{1}{\delta^2} \left(\frac{(M_1M_2)^{1/2}}{(d_1d_2)K^3} + \frac{(M_1M_2)^{1/2}}{(d_1d_2)T^2K} + \frac{T^2(M_1M_2)^{1/2}}{K^5} + \frac{(M_1M_2)^{3/4}}{T^4} \right)\\
&\ll TK \tilde C^{\eps} \sum_{  \delta^2d_1d_2 \leq \tilde C^{1/4+\eps}  } \frac{1}{\delta^2d_1d_2} \left(\frac{\tilde C^{1/4}}{K^3} + \frac{\tilde C^{1/4}}{T^2K} + \frac{T^2\tilde C^{1/4}}{K^5} + \frac{\tilde C^{3/8}}{T^4} \right)\\
&\ll (TK)^{1+\eps} \left(\frac{T^4}{K^5} + \frac{K^3}{T^4}\right).
\end{split}
\end{equation}

\subsection{The contribution of \eqref{newpolar1} and \eqref{newpolar2}}

We show that the integrals \eqref{Fplus}--\eqref{Fminus} are negligible in the ranges \eqref{m1}--\eqref{m2}.  Starting with the first, we see by the rapid decay of $\Omega_3$ in the definition of $F_{d_1,d_2,c}(x,y)$ that the contribution of $|d_1^2 x-d_2^2 y|\geq  T^{\eps-1} d_1^2 M_1$ is negligible. Introducing $w:=(\sqrt{d_1^2 x}-\sqrt{d_2^2 y})^2$ and applying several integrations by parts with respect to $w$ shows that the contribution of $\sqrt{wd_1^2 x}\geq d_1d_2c T^{1+\eps}$ is also negligible. Using
$|d_1^2 x-d_2^2 y|\ll\sqrt{wd_1^2 x}$ we infer that we can restrict the integration to
\[|d_1^2 x-d_2^2 y|\leq Z:=\tilde{C}^\eps\min\left(\frac{d_1^2M_1}{T}, d_1d_2cT\right)\]
at the cost of a negligible error. In other words, writing $z:=d_1^2x-d_2^2y$ we can approximate the above integral by
\[-\frac{1}{d_2^2}\int_{-Z}^Z\int_0^\infty F_{d_1,d_2,c}\left(x,\frac{d_1^2x-z}{d_2^2}\right)P^\pm_{r,d_1h}\left(\frac{d_1^2x-z}{d_2^2}\right)
\,J_g^\pm\left(\frac{4\pi\sqrt{m_1x}}{d_2h}\right)dx\,dz\]
with negligible error. For $|z|\leq Z$ we have
\begin{align*}
\frac{\partial^j}{\partial x^j}\left\{
F_{d_1,d_2,c}\left(x,\frac{d_1^2x-z}{d_2^2}\right)P^\pm_{r,d_1h}\left(\frac{d_1^2x-z}{d_2^2}\right)
\right\}
&\ll_{j}\tilde{C}^\eps\left(\frac{K|z|}{d_1^2M_1^2}+\frac{1}{M_1}+\frac{z^2}{(d_1^2M_1^2)(d_1d_2c)}\right)^j\\[4pt]
&\ll_{j}\tilde{C}^\eps M_1^{-j},
\end{align*}
whence by Lemma~\ref{lemma4} and \eqref{sizes} the last integral is
\begin{align*}
&\ll_{j}\tilde{C}^\eps Z\left(\frac{d_2h}{\sqrt{M_1}}\right)^j
\ll_{j}\tilde{C}^\eps Z\left(\frac{(d_1d_1)^{1/2}G}{(M_1M_2)^{1/4}}\right)^j
\ll_{j}\tilde{C}^\eps Z\left(\frac{(d_1d_1)^{1/2}(M_1M_2)^{1/4}}{K^2}\right)^j\\
& \ll_{j} \tilde{C}^\eps Z \left(\frac{\tilde{C}^{1/8}}{K^2}\right)^j \ll \tilde{C}^{\varepsilon} Z \left(\frac{T+K}{K^2}\right)^j.
\end{align*}
Choosing $j\in\NN$ sufficiently large and using \eqref{assumption}, \eqref{sizes}, \eqref{m1} we conclude that the polar term \eqref{newpolar1} is negligible. In the same way we see that the polar term \eqref{newpolar2} is negligible.

\subsection{The final polar term}\label{secpolar}

It remains to estimate the contribution of the polar term \eqref{newpolar3}. This requires some non-trivial manipulation. First we remove the  dyadic decompositions by summing over $M_1, M_2, G$ using \eqref{dyadic}, but we keep in mind that the decay of $\Omega_3$ allows us to restrict to $\delta, d_1, d_2, c \leq K^{\eps}$, up to a negligible error.  This gives
\begin{align}\label{finalpolar}
&    TK  \sum_{\pm} \frac{1\pm i}{2}  \sum_{\substack{(\delta,N)=1 \\ (d_1, d_2) = 1}} \frac{\chi(d_1)\ov{\chi(d_2)}}{\delta^2d_1d_2}
    \sum_{N \mid c}   \frac{\phi(c) }{\phi(d_1d_2c)}\frac{1}{c^{1/2}}\sum_{\substack{gh = c\\(g, d_1d_2) = 1}}
    \asterisk\sum_{\substack{f\, (d_1^2d_2h)\\ (g\bar{f} \mp d_2, d_1^2d_2h) = d_1}} \frac{1}{d_1d_2h^2} \chi\left( \frac{g\bar{f}\mp d_2}{d_1}\right)\\
\nonumber
& \times\int_0^\infty\int_0^\infty \Omega_3\left(T \log \frac{d_2^2y}{d_1^2x}, \frac{\pm K^2c}{8\pi \sqrt{xy}}, \frac{\delta^4 d_1^2d_2^2xy}{\sqrt{\tilde{C}}}\right)    e\left(\mp \frac{(\sqrt{d_1^2x} - \sqrt{d_2^2y})^2}{d_1d_2c}\right)  \sum_{\pm, \pm} P_{r,d_2h}^\pm(x) P_{r,d_1h}^\pm(y) \frac{dx\,dy}{(xy)^{3/4}},
\end{align}
up to a negligible error coming from the fact that $\sum_{j\geq 0} \omega(x/2^j)$ is not necessarily $1$ for $x < 1$. In order to simplify the notation, we will only consider the $(+, +)$-term in the last sum and drop the superscripts at $P$.

We observe that the $\delta,d_1, d_2$-sum is rapidly converging due to the decay properties of $\Omega_3$. A trivial estimation shows that the double integral is $O(\tilde{C}^{\eps})$.  Now we replace $\Omega_3$ by $\tilde{\Omega}_3$ which introduces by \eqref{tilde} an admissible error of
\begin{equation}\label{errorpolar1}
 O\left((TK)^{1+\varepsilon}(T+K)^{-1}\right).
\end{equation}
Next we make a change of variables
\begin{displaymath}
  z := xy, \qquad w := \frac{d_2^2y}{d_1^2 x} - 1.
\end{displaymath}
By Taylor's formula,
\[T \log \frac{d_2^2y}{d_1^2x} = Tw + O(Tw^2) = Tw + O(T^{\eps-1})\]
in the range where $\tilde{\Omega}_3$ is not negligible. Again by Taylor's formula,
\begin{displaymath}
 \left(\sqrt{d_1^2x} - \sqrt{d_2^2y}\right)^2 = \frac{1}{4}d_1d_2 \sqrt{z}  w^2 (1+O(w)),
\end{displaymath}
hence
\begin{displaymath}
  e\left(\mp \frac{(\sqrt{d_1^2x} - \sqrt{d_2^2y})^2}{d_1d_2c}\right) = e\left(\mp \frac{\sqrt{z} w^2 }{4c}\right)\bigl(1+O(T^{\eps-1})\bigr)
\end{displaymath}
in the range where $\tilde{\Omega}_3$ is not negligible. Therefore the double integral equals
\begin{align}
\nonumber\int_{-1}^\infty\int_0^\infty &\tilde{\Omega}_3\left(T w, \frac{\pm K^2c}{8\pi \sqrt{z}}, \frac{\delta^4d_1^2d_2^2z}{\sqrt{\tilde{C}}}\right)    e\left(\mp \frac{\sqrt{z}w^2}{4c}\right) \\\times
\nonumber&P_{r,d_2h}\left(\frac{d_2\sqrt{z}}{d_1\sqrt{1+w}}\right)
   P_{r,d_1h}\left(\frac{d_1\sqrt{z(1+w)}}{d_2}\right) \frac{dw\,dz}{2(1+w)z^{3/4}}  + O(T^{\eps-1})\\
\label{polarerror2}=  \int_{-\infty}^\infty\int_0^\infty &\tilde{\Omega}_3\left(T w, \frac{\pm K^2c}{8\pi \sqrt{z}}, \frac{\delta^4d_1^2d_2^2z}{\sqrt{\tilde{C}}}\right)    e\left(\mp \frac{\sqrt{z}w^2}{4c}\right) \\\times
\nonumber &P_{r,d_2h}\left(\frac{d_2\sqrt{z}}{d_1}\right)
   P_{r,d_1h}\left(\frac{d_1\sqrt{z}}{d_2}\right) dw\,\frac{dz}{2z^{3/4}}  + O(T^{\eps-1}).
\end{align}

We rewrite the last $w$-integral using the definition \eqref{defom3}:
\begin{equation}\label{triple}
I:=\int_{-\infty}^{\infty} \int_0^{\infty}
\tilde{\Omega}_2\left(x, \frac{\pm K^2c}{8\pi \sqrt{z}}, \frac{\delta^4d_1^2d_2^2z}{\sqrt{\tilde{C}}}\right)
\exp\left(iTwx \mp \frac{i\pi\sqrt{z} w^2}{2c}\right)\,dx\,dw.
\end{equation}
This double integral is not absolutely convergent, so we consider
\begin{equation}\label{tripleeps}
I_\eps:=\int_{-\infty}^{\infty} \int_0^{\infty}
\tilde{\Omega}_2\left(x, \frac{\pm K^2c}{8\pi \sqrt{z}}, \frac{\delta^4d_1^2d_2^2z}{\sqrt{\tilde{C}}}\right)
\exp\left(iTwx - \frac{e^{\pm i(\pi/2-\eps)} \pi\sqrt{z} w^2}{2c}\right)\,dx\,dw.
\end{equation}
for $\varepsilon > 0$. The $x$-integral in \eqref{triple}--\eqref{tripleeps}
is a Fourier transform decaying rapidly in $w$,
therefore by cutting the $w$-integral at larger and larger parameters we see that $\limsup_{\eps\to 0+}|I-I_\eps|$
is smaller than any positive number, i.e.\ $I=\lim_{\eps\to0+}I_\eps$.
As the double integral \eqref{tripleeps} is absolutely convergent, we can change the order of integration there and compute the $w$-integral using \cite[3.323.2]{GR}:
\begin{displaymath}
  I_{\varepsilon} =  e^{\mp i(\frac{\pi}{4} - \frac{\varepsilon}{2})} \left(\frac{2c}{\sqrt{z}}\right)^{1/2}   \int_0^{\infty} \tilde{\Omega}_2\left(x, \frac{\pm K^2c}{8\pi \sqrt{z}}, \frac{\delta^4d_1^2d_2^2z}{\sqrt{\tilde{C}}}\right) \exp\left(-\frac{e^{\mp i(\pi/2 - \varepsilon)}cT^2x^2}{2\pi \sqrt{z}}\right) \,dx.
\end{displaymath}
Here the integrand is rapidly decaying in $x$, hence by a limsup argument as before we see that
\begin{displaymath}
I=\lim_{\eps\to 0+}I_\eps= e^{\mp i\frac{\pi}{4}} \left(\frac{2c}{\sqrt{z}}\right)^{1/2}   \int_0^{\infty} \tilde{\Omega}_2\left(x, \frac{\pm K^2c}{8\pi \sqrt{z}}, \frac{\delta^4d_1^2d_2^2z}{\sqrt{\tilde{C}}}\right) \exp\left(\frac{\pm icT^2x^2}{2\pi \sqrt{z}}\right) \,dx.
\end{displaymath}
Using also the definition \eqref{defom2} we can summarize that the $w$-integral in \eqref{polarerror2} equals
\[(1\mp i)\left(\frac{c}{\sqrt{z}}\right)^{1/2}
\int_0^{\infty}\int_0^{\infty} \tilde{\Omega}_1\left(x, y, \frac{\delta^4d_1^2d_2^2z}{\sqrt{\tilde{C}}}\right) \exp\left(\pm \frac{ic}{2\pi\sqrt{z}} \left(x^2T^2 + \frac{y^2K^2}{4}\right)\right) \,dy\,dx.\]

We integrate this over $z$ and substitute it back into \eqref{finalpolar} getting
\begin{displaymath}
\begin{split}
&TK  \sum_{\pm}  \sum_{\substack{(\delta,N)=1 \\ (d_1, d_2) = 1}} \frac{\chi(d_1)\ov{\chi(d_2)}}{(\delta d_1d_2)^2}
\sum_{N \mid c}   \frac{\phi(c) }{\phi(d_1d_2c)} \sum_{\substack{gh = c\\(g, d_1d_2) = 1}} \frac{1}{h^2}
\asterisk\sum_{\substack{f\, (d_1^2d_2h)\\ (g\bar{f} \mp d_2, d_1^2d_2h) = d_1}}  \chi\left( \frac{g\bar{f}\mp d_2}{d_1}\right)\\
\nonumber
&\times\int_0^{\infty} \int_0^{\infty}\int_0^{\infty} \tilde{\Omega}_1\left(x, y, \frac{\delta^4d_1^2d_2^2z}{\sqrt{\tilde{C}}}\right) \exp\left(\pm \frac{ic}{2\pi\sqrt{z}} \left(x^2T^2 + \frac{y^2K^2}{4}\right)\right) \\
& \qquad\qquad\qquad \times P_{r,d_2h}\left(\frac{d_2\sqrt{z}}{d_1}\right)
   P_{r,d_1h}\left(\frac{d_1\sqrt{z}}{d_2}\right) dy\,dx \,\frac{dz}{2z}.
   \end{split}
\end{displaymath}
Let us consider the case $r \neq 0$ and insert the $+$ case of  \eqref{Pdefinition}. Then we can recast the preceding display as
\begin{equation}\label{a}
\begin{split}
&   \zeta(1+2ir)^2 \,TK  \sum_{\pm}  \sum_{\substack{(\delta,N)=1 \\ (d_1, d_2) = 1}} \frac{\chi(d_1)\ov{\chi(d_2)}}{(\delta d_1d_2)^2}
    \sum_{N \mid c}   \frac{\phi(c) }{\phi(d_1d_2c)} \sum_{\substack{gh = c\\(g, d_1d_2) = 1}} \frac{1}{h^2}
    \asterisk\sum_{\substack{f\, (d_1^2d_2h)\\ (g\bar{f} \mp d_2, d_1^2d_2h) = d_1}}   \chi\left( \frac{g\bar{f}\mp d_2}{d_1}\right)\\
& \times  \int_0^{\infty} \int_0^{\infty}\int_0^{\infty} \tilde{\Omega}_1\left(x, y, \frac{\delta^4d_1^2d_2^2z}{\sqrt{\tilde{C}}}\right) \exp\left(\pm \frac{ic}{2\pi\sqrt{z}} \left(x^2T^2 + \frac{y^2K^2}{4}\right)\right)
\left(\frac{\sqrt{z}}{d_1d_2h^2}\right)^{2ir} dy\,dx \,\frac{dz}{2z}.
   \end{split}
\end{equation}
We make a change of variables
\begin{displaymath}
  \frac{c}{\pi \sqrt{z}} = v, \qquad z = \frac{c^2}{\pi^2 v^2},
\end{displaymath}
and write the triple integral as
\begin{displaymath}
  \int_0^{\infty} \int_0^{\infty}\int_0^{\infty} \tilde{\Omega}_1\left(x, y, \frac{\delta^4d_1^2d_2^2c^2}{\pi^2v^2\sqrt{\tilde{C}}}\right) \exp\left(\pm \frac{1}{2} iv \left((xT)^2+(\tfrac{1}{2}yK)^2\right)\right)  \left(\frac{c}{\pi v d_1d_2h^2}\right)^{2ir} dy\,dx \,\frac{dv}{v}.
\end{displaymath}

Finally  we insert the definition \eqref{defom1} and arrive at
\begin{displaymath}
\begin{split}
 & \int_0^{\infty} \int_0^{\infty}\int_0^{\infty} W_1(x)W_2(y) \frac{1}{2\pi i} \int_{(2)} \widecheck{G}_0 (s) \left(   \frac{4 \pi  \delta^2d_1d_2c}{ v N \left((xT)^2+(\frac{1}{2}yK)^2\right)}\right)^{-2s} ds  \\
 &\quad\quad\quad  \times \exp\left(\pm \frac{1}{2} iv \left((xT)^2+(\tfrac{1}{2}yK)^2\right)\right)  \left(\frac{c}{\pi v d_1d_2h^2}\right)^{2ir} dy\,dx \,\frac{dv}{v} \\
  \end{split}
\end{displaymath}
by Mellin inversion. We compute the $v$-integral by \cite[3.381.4]{GR}; the change of the order of integration can be justified similarly as before. Reorganizing, we obtain
\begin{align*}
&\left(\frac{c}{2\pi  d_1d_2h^2}\right)^{2ir} \int_0^{\infty}\int_0^{\infty} W_1(x)W_2(y) \left((xT)^2+(\tfrac{1}{2}yK)^2\right)^{2ir} \,dy\,dx\\
&\times \frac{1}{2\pi i} \int_{(2)} \widecheck{G}_0 (s) \left(   \frac{N}{2 \pi  \delta^2d_1d_2c}\right)^{2s}\Gamma(2s-2ir) \exp(\pm i \pi(s-ir)) \,ds,
\end{align*}
where by \eqref{smoothlog} the $x, y$-integral is simply
\begin{displaymath}
  \mathcal{M}_{ir}(T, K) \left(\frac{(2\pi)^4}{N^2}\right)^{ir}.
\end{displaymath}
We substitute this into \eqref{a}, recall the definition \eqref{defXi} and recast \eqref{a} as
\begin{displaymath}
\zeta(1+2ir)^2 \,TK \mathcal{M}_{ir}(T, K) \,\frac{1}{2\pi i} \int_{(2)} \widecheck{G}_0 (s) \,\Xi_r(s) \,N^{2s-2ir}
2(2\pi)^{-2s+2ir} \Gamma(2s-2ir) \cos(  \pi(s-ir)) \, ds,
\end{displaymath}
where $\Xi_r(s)=\Xi_r^\pm(s)$ denotes the function in \eqref{finalXi}.

To summarize, the contribution of \eqref{newpolar3} equals, up to an admissible error,
\begin{equation}\label{finalcontr}\zeta(1+2ir)^2 \prod_{p \mid N} (1-p^{-2-4ir}) \,TK \mathcal{M}_{ir}(T, K) \,\frac{1}{2\pi i}
\int_{(2)} \widecheck{G}_0 (s) \,Z_{r}(s)\,ds\end{equation}
with the kernel
\[Z_{r}(s):=2(2\pi)^{-2s+2ir}\Gamma(2s-2ir) \cos(  \pi(s-ir))\zeta(2s-2ir)\zeta(1+2s+2ir).\]
Using \cite[8.334.2 \& 8.335.1]{GR} the kernel equals
\[Z_{r}(s)=\pi^{\frac{1}{2}-2s+2ir}\frac{\Gamma(s-ir)}{\Gamma(\frac{1}{2}-s+ir)}\zeta(2s-2ir)\zeta(1+2s+2ir),\]
hence by the functional equation for the Riemann zeta function we obtain
\[Z_{r}(s)=\zeta(1-2s+2ir)\zeta(1+2s+2ir).\]

Let us assume $r\neq 0$, then the integrand in \eqref{finalcontr} is an odd function of $s$ which is holomorphic except for a simple pole
at $s=0$ as well as possible poles at   $s = \pm ir$. 
It follows that the $s$-integral equals half the sum of its residues, that is, \eqref{finalcontr} equals
\begin{equation}\label{constbpm}
\begin{split}
& \frac{1}{2}  \zeta(1+2ir)^4 \prod_{p \mid N}(1-p^{-2-4ir})\,TK \mathcal{M}_{ir}(T,K)\\
- & \frac{1}{2} \zeta(1+2ir)^2 \zeta(1+4ir)  \prod_{p \mid N}(1-p^{-2-4ir})\, \widecheck{G}_0(ir) \, TK\mathcal{M}_{ir}(T, K).
\end{split}
\end{equation}
Here we also used that $\widecheck{G}_0(-ir) = - \widecheck{G}_0(ir)$. The $(-, -)$ case in the last sum of \eqref{finalpolar} gives a similar contribution to $\mathcal{M}_{-ir}(T, K)$, so that the second line of \eqref{constbpm} together with the corresponding part of the $(-, -)$ case precisely cancels \eqref{extrares1}. The $(+, -)$ and $(-, +)$ cases contribute to $\mathcal{L}_0$.

The case $r=0$ can be treated similarly and gives a linear combination of $\mathcal{L}_0$, $\mathcal{L}_1$, $\mathcal{L}_2$ in Theorem~\ref{theorem3}.

This completes the discussion of the off-diagonal term. The various error terms \eqref{offerror1}, \eqref{offerror2}, \eqref{errorpolar1}, \eqref{polarerror2} encountered so far are admissible for Theorems~\ref{theorem1}--\ref{theorem3}, while the first line of \eqref{constbpm}
and its counterpart with $ir$ replaced by $-ir$ have the desired shape for Theorem \ref{theorem2}. The proofs are complete.

\section{Appendix}

In this Appendix we deduce Lemma~\ref{Dlemma} from the modularity of $g=E_r$ $(r\neq 0)$,
in order to emphasize the analogy with the cuspidal case. We follow closely \cite[Section~2.4]{HM}.

By \cite[(3.29)]{Iw3} we have the Fourier decomposition
\begin{align*}
\theta\left(\tfrac{1}{2}+ir\right)g(x+iy)\,=\
&\theta\left(\tfrac{1}{2}+ir\right)y^{\frac{1}{2}+ir}+\theta\left(\tfrac{1}{2}-ir\right)y^{\frac{1}{2}-ir}\\
+\ &4\sqrt{y}\sum_{n=1}^\infty\lambda_g(n)K_{ir}(2\pi ny)\cos(2\pi nx),
\end{align*}
where
\begin{equation}\label{kz}
\theta(z):=\pi^{-z}\Gamma(z)\zeta(2z)
\qquad\text{and}\qquad
\lambda_g(n):=\sum_{ab=n}\left(\frac{a}{b}\right)^{ir}.
\end{equation}
For convenience we introduce
\[D^{\pm 1}(g,x,s):=\frac{1}{2}D(g,x,s)\pm\frac{1}{2}D(g,-x,s),\]
i.e.\
\begin{align*}
D^{+1}(g,x,s)&=\sum_{n=1}^\infty \lambda_g(n) \cos(2\pi nx) n^{-s},\\
D^{-1}(g,x,s)&=\sum_{n=1}^\infty \lambda_g(n) i\sin(2\pi nx) n^{-s}.
\end{align*}
It will be more pleasant to work with the Maass shift (cf.\ \cite[(4.3)]{DFI})
\[\tilde g(x+iy):=-y\left(i\frac{\partial}{\partial x}+\frac{\partial}{\partial y}\right)\theta\left(\tfrac{1}{2}+ir\right)g(x+iy)\]
which is a weight $2$ Eisenstein series of Laplacian eigenvalue $1/4+r^2$.
By \cite[Section~4]{DFI} we have the Fourier decomposition
\[\tilde g(x+iy)=\gconst(y)+\gser(x+iy),\]
where
\begin{align}
\label{gconst}\gconst(y)&:=
-\left(\tfrac{1}{2}+ ir\right)\theta\left(\tfrac{1}{2}+ir\right)y^{\frac{1}{2}+ ir}
-\left(\tfrac{1}{2}- ir\right)\theta\left(\tfrac{1}{2}-ir\right)y^{\frac{1}{2}- ir},\\
\label{gser}\gser(x+iy)&:=\sum_{n=1}^\infty\frac{\lambda_g(n)}{\sqrt{n}}
\left\{V_{2,ir}^{+1}(\pi ny)\cos(2\pi nx)+V_{2,ir}^{-1}(\pi ny)i\sin(2\pi nx)\right\},
\end{align}
and $V_{2,ir}^{\pm 1}$ is as in \cite[(8.27)]{DFI}
\[V_{2,ir}^{\pm 1}(y):=W_{1,ir}(4y)\mp\left(\tfrac{1}{4}+r^2\right)W_{-1,ir}(4y).\]
Utilizing the functional equation
\begin{equation}\label{modular}
\tilde g\left(\frac{a}{c}+\frac{iy}{c}\right)=-\tilde g\left(-\frac{\ov a}{c}+\frac{i}{cy}\right),\qquad y>0,
\end{equation}
which is a consequence of modularity, we see by standard estimates that
\begin{equation}\label{gserbound}
\gser\left(\frac{a}{c}+\frac{iy}{c}\right)\ll_{g,c}\min(y^{-1/2},y e^{-2\pi y}).\end{equation}
Therefore, taking Mellin transforms we obtain by \eqref{gser}
\begin{equation}\label{intrep}\int_0^\infty \gser\left(\frac{a}{c}+\frac{iy}{c}\right)
y^{s-\frac{1}{2}}\frac{dy}{y}=
\frac{1}{\sqrt{c}}\left(\frac{c}{\pi}\right)^s F\left(g,\frac{a}{c},s\right),\qquad\Re s>1,\end{equation}
where
\begin{equation}\label{Fdef}
F(g,x,s):=\Phi_2^{+1}(s,ir)D^{+1}(g,x,s)+\Phi_2^{-1}(s,ir)D^{-1}(g,x,s)\end{equation}
and
\[\Phi_2^{\pm 1}(s,ir):=\sqrt{\pi}\int_0^\infty V_{2,ir}^{\pm 1}(y)\,y^{s-\frac{1}{2}}\frac{dy}{y}.\]
The last function is the same as \cite[(8.25)]{DFI} except that we deleted the $4$ from the denominator.
The reason is that we would like to apply \cite[Lemma~8.2]{DFI} but that lemma requires this
correction, because the initial identity \cite[(8.30)]{DFI} is missing a factor $1/4$ on the right hand side.
By \cite[Lemma~8.2]{DFI},
\begin{align}
\label{Fiplus}\Phi_2^{+1}(s,ir)&=\left(s-\frac{1}{2}\right)\Gamma\left(\frac{s+ir}{2}\right)\Gamma\left(\frac{s-ir}{2}\right),\\
\label{Fiminus}\Phi_2^{-1}(s,ir)&=2\Gamma\left(\frac{s+ir+1}{2}\right)\Gamma\left(\frac{s-ir+1}{2}\right).
\end{align}

We shall derive the analytic properties of $D(g,x,s)$ from those of $F(g,x,s)$. Splitting the integral in \eqref{intrep} and applying \eqref{modular} in the form
\[\gser\left(\frac{a}{c}+\frac{iy}{c}\right)=-\gser\left(-\frac{\ov a}{c}+\frac{i}{cy}\right)
-\gconst\left(\frac{1}{cy}\right)-\gconst\left(\frac{y}{c}\right),\qquad 1>y>0,\]
\eqref{intrep} becomes
\begin{align}
\label{Fdecomp}\left(\frac{c}{\pi}\right)^s F\left(g,\frac{a}{c},s\right)
=&\sqrt{c}\int_1^\infty\left\{\gser\left(\frac{a}{c}+\frac{iy}{c}\right)y^{s-\frac{1}{2}}
-\gser\left(-\frac{\ov a}{c}+\frac{iy}{c}\right)y^{\frac{1}{2}-s}\right\}\frac{dy}{y}
+P(ir,c,s),\\
\intertext{where}
\label{Pdef}P(ir,c,s):=
&-\sqrt{c}\int_1^\infty\left\{\gconst\left(\frac{1}{cy}\right)+\gconst\left(\frac{y}{c}\right)\right\}
y^{\frac{1}{2}-s}\frac{dy}{y}.
\end{align}
By \eqref{gserbound} the first integral in \eqref{Fdecomp} is absolutely convergent for any $s\in\CC$, hence
it defines an entire function. Moreover, it becomes its own \emph{negative} under the substitution
$\frac{a}{c}\to-\frac{\ov a}{c}$ and $s\to 1-s$ : we shall say it is \emph{symmetric}
for short. The second integral \eqref{Pdef} can be calculated explicitly using \eqref{gconst},
\begin{equation}\label{Ppolar}
P(ir,c,s)=\sum_{\pm}c^{\mp ir}\left(\tfrac{1}{2}\pm ir\right)\theta\left(\tfrac{1}{2}\pm ir\right)
\left(\frac{1}{s\pm ir}-\frac{1}{1-s\pm ir}\right),\qquad \Re s>1.
\end{equation}
The function $P(ir,c,s)$ is meromorphic and symmetric in $s\in\CC$, hence
$\left(\frac{c}{\pi}\right)^s F\left(g,\frac{a}{c},s\right)$ is also symmetric and differs from
$P(ir,c,s)$ by an entire function. If we apply this conclusion for $-\frac{a}{c}$ in place of
$\frac{a}{c}$ and combine \eqref{Fdef} with $D^{\pm 1}(g,-x,s)=\pm D^{\pm 1}(g,x,s)$, then we see
that the following functions are entire and symmetric:
\begin{equation}\label{Dsep}
\left(\frac{c}{\pi}\right)^s\Phi_2^{+1}(s,ir)D^{+1}\left(g,\frac{a}{c},s\right)-P(ir,c,s)
\qquad\text{and}\qquad \left(\frac{c}{\pi}\right)^s\Phi_2^{-1}(s,ir)D^{-1}\left(g,\frac{a}{c},s\right).\end{equation}
Now we can argue exactly as on \cite[p.~597]{HM} (with $k=2$) to see that $D(g,x,s)$ satisfies the functional equation \eqref{Dfunct}, cf.\ \cite[(15)]{HM}. Here we note that the functions $\Psi_{k,it}^\pm$ on \cite[p.~597]{HM} should have been halved, therefore in \cite[(19)--(20)]{HM} the factors $2^{2s}$ are really $2^{2s-1}$.

We also need to analyze the poles of $D\left(g,\frac{a}{c},s\right)$ for which we go back to \eqref{Dsep}.
We see by \eqref{Fiminus} that $D^{-1}\left(g,\frac{a}{c},s\right)$ is entire.
By \eqref{Fiplus} and \eqref{Ppolar} the poles of $D^{+1}\left(g,\frac{a}{c},s\right)$ are at $s=1\pm ir$; they are simple with residues (cf.\ \eqref{kz})
\[\frac{1}{\Phi_2^{+1}(1\pm ir,ir)}\left(\frac{\pi}{c}\right)^{1\pm ir}c^{\mp ir}
\left(\tfrac{1}{2}\pm ir\right)\theta\left(\tfrac{1}{2}\pm ir\right)=\frac{\zeta(1\pm 2ir)}{c^{1\pm 2ir}}.\]
We note that the potential poles at $s=\pm ir$ coming from the poles of $P(ir,c,s)$ are
canceled by the poles of $\Phi_2^{+1}(s,ir)$ there, while the potential pole at $s=1/2$ coming from the zero of $\Phi_2^{+1}(s,ir)$ is canceled by the zero of $P(ir,c,s)$ there. This concludes our proof of Lemma~\ref{Dlemma}.

\end{document}